\newtheorem{teo}{Theorem}[section]
\newtheorem{prop}[teo]{Proposition}
\newtheorem{lem}[teo]{Lemma}
\newtheorem{coro}[teo]{Corollary}
\newtheorem{rem}[teo]{Remark}
\begin{document}

\title{\vspace*{0cm}Riemannian metrics on an infinite dimensional symplectic group}

\date{}
\author{Manuel L\'opez Galv\'an\footnote{Supported by Instituto Argentino de Matem\'atica(CONICET-PIP 2010-0757), Universidad Nacional de General Sarmiento and ANPCyT (PICT 2010-2478).}}

\maketitle

\setlength{\parindent}{0cm} 

\begin{abstract}
The aim of this paper is the geometric study of the symplectic operators which are a perturbation of the identity by a Hilbert-Schmidt operator. This subgroup of the symplectic group was introduced in Pierre de la Harpe's classical book of Banach-Lie groups. Throughout this paper we will endow the  tangent spaces with different Riemannian metrics. We will use the minimal curves of the unitary group and the positive invertible operators to compare the length of the geodesic curves in each case. Moreover we will study the completeness of the symplectic group with the geodesic distance. 
\end{abstract}

\section{Introduction}
The symplectic group has many applications in quantum theory with infinitely many degrees of freedom, i.e. in canonical quantum field theory,
string theory, statistical quantum physics and solition theory. According to Shale's definitions \cite{Shale}, if we have a quantization $R(.)$ of the real Hilbert space $\Sigma(\mathcal{H})$ it is of interest to determine the subgroups of the symplectic group consisting of those $g$ for which exists an unitary transformation $Y(g)$ such that $R(gz)=Y(g)R(z)Y(g)^{-1}$ for all $z \in \mathcal{H}$. Let $\vert g\vert :=(g^*g)^{1/2}$ be the absolute value operator, in \cite{Shale} it was proved that in the case of Fock-Cook quantization (see \cite{Cook} for some background) the subgroup is $\lbrace g : \vert g\vert -1  \ \mbox{is Hilbert-Schmidt}\rbrace$. 

In this paper we study a variant of this subgroup, in which $g$ is only a perturbation of the identity by a Hilbert-Schmidt operator. In classical finite dimensional Riemannian theory it is well known the fact that given two points there is a minimal geodesic curve that joins them and this is equivalent to the completeness of the metric space with the geodesic distance; this is the Hopf-Rinow theorem. In the infinite dimensional case this is no longer true. In \cite{McAlpin} and \cite{Atkin}, McAlpin and Atkin showed in two examples how this theorem can fail. The main result of this paper establishes that if we consider the left invariant metric in the restricted symplectic group then its geodesic distance makes of the group a complete metric space. In the process to do it, we use the existence of a smooth polar decomposition in the group; this will allow us to define a mixed metric related to the unitary and positive part of the group. In this way we will use minimality results of the restricted unitary group $U_2(\mathcal{H})$ (see \cite{Andruchow2}) and we also prove some geometric properties of the symplectic positive operators with different Riemannian metrics.       

\section{Background and definitions}
Let $\mathcal{H}$ be an infinite dimensional real Hilbert space and let $\mathcal{B}(\mathcal{H})$ be the space of bounded operators. Denote by $\mathcal{B}_2(\mathcal{H})$ the Hilbert-Schmidt class $$\mathcal{B}_2(\mathcal{H})=\left\{a \in \mathcal{B}(\mathcal{H}): Tr(a^*a)< \infty\right\}$$ where $Tr$ is the usual trace in $\mathcal{B}(\mathcal{H})$. This space is a Hilbert space with the inner product $$<a,b>=Tr(b^*a).$$ The norm induced by this inner product is called the 2-norm and denoted by $$\Vert a \Vert_2=Tr(a^*a)^{1/2}.$$ The usual operator norm will be denoted by $\Vert \ \Vert$.

If $\mathcal{A}\subset\mathcal{B}(\mathcal{H})$ is any subset of operators we use the subscript $h$ (resp $ah$) to denote the subset of Hermitian (resp. anti-Hermitian) operators of it, i.e. $\mathcal{A}_h=\left\{ x \in \mathcal{A}: x^*=x\right\}$ and $\mathcal{A}_{ah}=\left\{ x \in \mathcal{A}: x^*=-x\right\}$. 

We fix a complex structure; that is a linear isometry $J \in \mathcal{B}(\mathcal{H})$ such that, $$J^2=-1  \ \mbox{and} \ J^*=-J.$$
The symplectic form $w$ is given by $w(\xi,\eta)=\left\langle J\xi,\eta \right\rangle$. 

We denote by $GL(\mathcal{H})$ the group of invertible operators, with $GL(\mathcal H)^+$ the space of positive invertible operators,  and by ${\rm Sp}(\mathcal{H})$ the subgroup of invertible operators which preserve the symplectic form, that is $g \in {\rm Sp}(\mathcal{H})$ if $w(g\xi,g\eta)=w(\xi,\eta)$. Algebraically $${\rm Sp}(\mathcal{H})=\left\{ g \in GL(\mathcal{H}): g^*Jg=J\right\}.$$ This group is a Banach-Lie group and its Banach-Lie algebra is given by $$\mathfrak{sp}(\mathcal{H})=\left\{x \in \mathcal{B}(\mathcal{H}): xJ=-Jx^*\right\}.$$ Denote by $\mathcal{H}_J$ the Hilbert space $\mathcal{H}$ with the action of the complex field $\mathbb{C}$ given by $J$, that is; if $\lambda=\lambda_1+i\lambda_2 \in \mathbb{C}$ and $\xi \in \mathcal{H}$ we can define the action as $\lambda\xi:=\lambda_1\xi+\lambda_2J\xi$ and the complex inner product as $<\xi,\eta>_{\mathbb{C}}=<\xi,\eta>-iw(\xi,\eta)$. 

Denote by $\mathcal{B}(\mathcal{H}_J)$ the space of bounded complex linear operators in $\mathcal{H}_J$. A straightforward computation shows that $\mathcal{B}(\mathcal{H}_J)$ consists of the elements of $\mathcal{B}(\mathcal{H})$ which commute with $J$.   

One property that we will use in this paper is the stability of the adjoint operation. We give a short proof of this fact.
\begin{prop}\label{est adjunta} If $g \in {\rm Sp}(\mathcal{H})$ then $g^* \in {\rm Sp}(\mathcal{H})$.
\end{prop}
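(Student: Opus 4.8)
The plan is to reduce the statement to a direct algebraic manipulation of the defining relation $g^*Jg=J$. First I would observe that, applying the definition of ${\rm Sp}(\mathcal{H})$ to the operator $g^*$ and using $(g^*)^*=g$, membership $g^* \in {\rm Sp}(\mathcal{H})$ is equivalent to the single identity $gJg^*=J$. So the whole proposition amounts to deducing $gJg^*=J$ from the hypothesis $g^*Jg=J$.

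The structural facts I would use are that $g$ (and hence $g^*$) is invertible, since $g \in GL(\mathcal{H})$, and that $J$ itself is invertible with $J^{-1}=-J$, which follows at once from $J^2=-1$. Taking inverses on both sides of $g^*Jg=J$ gives $g^{-1}J^{-1}(g^*)^{-1}=J^{-1}$; replacing $J^{-1}$ by $-J$ on both sides and cancelling the common sign yields $g^{-1}J(g^*)^{-1}=J$. Multiplying this relation on the left by $g$ and on the right by $g^*$ then produces exactly $gJg^*=J$, which is the condition characterizing $g^* \in {\rm Sp}(\mathcal{H})$.

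I do not anticipate a genuine obstacle: the argument is a short computation resting only on the invertibility of $g$ and the elementary identity $J^{-1}=-J$. The one point that deserves care is the bookkeeping with adjoints and inverses, namely correctly writing the membership condition for $g^*$ as $gJg^*=J$ (rather than $g^*Jg=J$), so that the inverted hypothesis matches the target relation exactly. An alternative route, should one prefer to avoid inverting a product, is to rearrange $g^*Jg=J$ into $g^*J=Jg^{-1}$ and solve for $g^*$, but the inverse-of-both-sides approach seems the cleanest.
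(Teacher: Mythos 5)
Your proof is correct and is essentially the same as the paper's: both reduce the claim to verifying $gJg^*=J$ and obtain it by a one-line algebraic manipulation of the defining relation $g^*Jg=J$, using the invertibility of $g$ and the identity $J^{-1}=-J$. The only cosmetic difference is that you invert both sides of the relation, while the paper rewrites it as $g^*J=Jg^{-1}$ and multiplies by $gJ$; the content is identical.
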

\begin{proof} The proof is a short computation using the definition, indeed if $g \in {\rm Sp}(\mathcal{H})$ then $g^*J=Jg^{-1}$ and times by $gJ$ we obtain $gJg^*J=-1$ then $gJg^*=J$.   
\end{proof}
The above proposition leads us to one of the most important properties of the symplectic group, that is the stability under polar decompositions. 
\begin{coro} If $u\vert g\vert$ is the polar decomposition of an element in ${\rm Sp}(\mathcal{H})$ then its unitary part $u$ and its positive part $\vert g\vert$ belong in ${\rm Sp}(\mathcal{H})$.
\end{coro}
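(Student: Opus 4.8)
The plan is to prove the statement in two stages: first show that the positive part $|g|$ is symplectic, and then deduce the same for the unitary part $u$ by a one-line group argument. To set up, note that since $g$ is invertible, $|g|=(g^*g)^{1/2}$ is a positive \emph{invertible} operator and $u=g|g|^{-1}$ is a genuine unitary. By Proposition \ref{est adjunta} we have $g^*\in{\rm Sp}(\mathcal{H})$, and because ${\rm Sp}(\mathcal{H})$ is a group the product $g^*g=|g|^2$ belongs to ${\rm Sp}(\mathcal{H})$ as well. Writing out the defining relation $(|g|^2)^*J|g|^2=J$ and using $|g|^*=|g|$, this reads $|g|^2J|g|^2=J$, which I would rearrange into the conjugation identity $J^{-1}|g|^2J=|g|^{-2}$.

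The heart of the argument is to pass from $|g|^2$ to $|g|$ by extracting square roots inside this identity, and this is the step I expect to be the main obstacle. The crucial observation is that $J$ is unitary: from $J^*=-J$ and $J^2=-1$ one gets $J^*J=1$, so $J^{-1}=J^*$. Consequently conjugation $x\mapsto J^{-1}xJ=J^*xJ$ preserves positivity, and in particular $J^{-1}|g|J$ is a positive invertible operator. Since $\bigl(J^{-1}|g|J\bigr)^2=J^{-1}|g|^2J=|g|^{-2}=\bigl(|g|^{-1}\bigr)^2$, both $J^{-1}|g|J$ and $|g|^{-1}$ are positive square roots of the same positive operator; by uniqueness of the positive square root they coincide, $J^{-1}|g|J=|g|^{-1}$. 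Rearranging gives $|g|J|g|=J$, and since $|g|$ is self-adjoint this is exactly $|g|^*J|g|=J$, i.e. $|g|\in{\rm Sp}(\mathcal{H})$.

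Finally, since $g$ and $|g|$ both lie in the group ${\rm Sp}(\mathcal{H})$, so does $u=g|g|^{-1}$, which finishes the proof. The only delicate point is the square-root step above: one must justify both that $J^{-1}|g|J$ is positive (which rests on $J$ being unitary) and that the positive square root is unique. An alternative route that sidesteps functional calculus is to introduce the anti-automorphism $\theta(x)=J^{-1}x^*J$, for which $x\in{\rm Sp}(\mathcal{H})$ holds iff $\theta(x)=x^{-1}$; then $\theta(g)=\theta(|g|)\theta(u)$ expresses $\theta(g)$ as a product of a positive operator and a unitary, and comparing it with $g^{-1}=|g|^{-1}u^*$ via uniqueness of the polar decomposition forces $\theta(|g|)=|g|^{-1}$ and $\theta(u)=u^{-1}$, yielding again that both parts are symplectic.
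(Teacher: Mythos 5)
Your proof is correct. The paper states this corollary without any proof, presenting it as an immediate consequence of Proposition~\ref{est adjunta}, and your argument --- adjoint stability gives $g^*g=|g|^2\in{\rm Sp}(\mathcal{H})$, uniqueness of the positive square root (using that $J$ is unitary, so conjugation by $J$ preserves positivity) gives $|g|\in{\rm Sp}(\mathcal{H})$, and the group property then gives $u=g|g|^{-1}\in{\rm Sp}(\mathcal{H})$ --- is exactly the standard filling-in of that omitted argument, so it matches the paper's intended route.
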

 We consider now the restricted subgroup of ${\rm Sp}(\mathcal{H})$ $${\rm Sp}_2(\mathcal{H})=\left\{ g \in {\rm Sp}(\mathcal{H}): g-1 \in \mathcal{B}_2(\mathcal{H})\right\}.$$ Since $\mathcal{B}_2(\mathcal{H})$ is a Banach algebra, it is clear that this subgroup is also stable for the adjoint operation and for the polar decomposition; we denote by $$U_2(\mathcal{H}_J)=\left\{g \in U_2(\mathcal{H}): gJ=Jg\right\} \ \ \mbox{and}$$ $${\rm Sp}_2^+({\mathcal H})=\left\{g \in {\rm Sp}_2(\mathcal{H}) : g>0\right\}$$  its unitary part and positive part respectively where $U_2(\mathcal{H})$ is the classical unitary group whose elements are Hilbert-Schmidt perturbations of the identity operator 1. It is obvious that the unitary part is a closed subgroup of ${\rm Sp}_2(\mathcal{H})$. In the infinite dimensional setting, this does not guarantee a nice submanifold structure; in Proposition \ref{subgr} we will prove that $U_2(\mathcal{H}_J)$ is a Banach-Lie subgroup of ${{\rm Sp}_2}(\mathcal{H})$.
 
 Throughout this paper, if $M$ is any submanifold of the symplectic group, we will denote by  $\mathfrak{b}(g,v)$ the metric in each tangent space  $T_gM$. The length of a smooth curve measured with the metric $\mathfrak{b}$ will be denoted by $$L_{\mathfrak{b}}(\alpha)=\int_0^1 \mathfrak{b}(\alpha(t),\dot{\alpha}(t)) dt.$$ We define the geodesic distance between two points $p,q \in M\subseteq {\rm Sp}_2(\mathcal{H})$ as the infimum of the length of all piecewise smooth curves in $M$ joining $p$ to $q$,  $$d_{\mathfrak{b}}(p,q)=\inf\left\{L_{\mathfrak{b}}(\alpha): \alpha\subset M, \alpha(0)=p, \ \alpha(1)=q \right\}.$$

\section{Local structure of ${\rm Sp}_2(\mathcal{H})$}
Some of the following facts are have been well-known for general Schatten ideals, more precisely the Banach-Lie group structure was noted in the book \cite{Harpe}. Here we will complete some details for our case of the Hilbert-Schmidt ideal.

Given $g_1,g_2 \in {\rm Sp}_2(\mathcal{H})$, it is obvious that $g_1-g_2$ belongs in $\mathcal{B}_2(\mathcal{H})$; hence we can endow the restricted symplectic group with the metric  $\|g_1-g_2\|_2$.   

\begin{prop} \label{sp2cerrado} The metric space $({\rm Sp}_2(\mathcal{H}), \Vert . \Vert_2)$ is complete.
\end{prop}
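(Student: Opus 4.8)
The plan is to show sequential completeness directly: take a Cauchy sequence $(g_n)$ in $({\rm Sp}_2(\mathcal{H}),\Vert\cdot\Vert_2)$, produce a candidate limit, and verify it lies in ${\rm Sp}_2(\mathcal{H})$. First I would set $a_n=g_n-1\in\mathcal{B}_2(\mathcal{H})$; since $\Vert g_n-g_m\Vert_2=\Vert a_n-a_m\Vert_2$, the sequence $(a_n)$ is Cauchy in the Hilbert space $\mathcal{B}_2(\mathcal{H})$, which is complete. Hence $a_n\to a$ for some $a\in\mathcal{B}_2(\mathcal{H})$, and I set $g:=1+a$. Then $\Vert g_n-g\Vert_2\to 0$, and because $\Vert\cdot\Vert\le\Vert\cdot\Vert_2$ we also have $g_n\to g$ in the operator norm. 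It remains to prove that $g\in{\rm Sp}_2(\mathcal{H})$, i.e. that $g$ is invertible and satisfies $g^*Jg=J$.

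Next I would pass the symplectic identity to the limit. Each $g_n$ satisfies $g_n^*Jg_n=J$. Since $g_n\to g$ and $g_n^*\to g^*$ in operator norm, and $\sup_n\Vert g_n\Vert<\infty$ (a Cauchy sequence is bounded), the decomposition $g_n^*Jg_n-g^*Jg=g_n^*J(g_n-g)+(g_n^*-g^*)Jg$ shows that $g_n^*Jg_n\to g^*Jg$ in operator norm. Therefore $g^*Jg=J$.

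The main obstacle is \emph{invertibility} of the limit $g$, since passing to a limit can in principle destroy it; here the Hilbert-Schmidt structure saves the day. Because $a\in\mathcal{B}_2(\mathcal{H})$ is compact, $g=1+a$ is a compact perturbation of the identity, hence a Fredholm operator of index zero. To get invertibility it then suffices to check that $g$ is injective, and this follows from the relation already established: if $g\xi=0$, then $J\xi=g^*Jg\xi=0$, and since $J$ is invertible (indeed $J^{-1}=-J$) we get $\xi=0$. A Fredholm operator of index zero with trivial kernel is invertible, so $g\in GL(\mathcal{H})$.

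Combining the last two steps, $g$ is invertible and $g^*Jg=J$, so by the algebraic description $g\in{\rm Sp}(\mathcal{H})$; moreover $g-1=a\in\mathcal{B}_2(\mathcal{H})$, whence $g\in{\rm Sp}_2(\mathcal{H})$. Since $\Vert g_n-g\Vert_2\to 0$, the Cauchy sequence converges in ${\rm Sp}_2(\mathcal{H})$, which proves completeness. I expect the only delicate point to be the invertibility argument; the reduction to $\mathcal{B}_2(\mathcal{H})$ and the continuity of multiplication in operator norm are routine.
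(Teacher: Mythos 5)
Your proposal is correct, and it diverges from the paper's proof at exactly the decisive point: invertibility of the limit $g=1+a$. The paper never invokes compactness or Fredholm theory. Instead it exploits the stability of ${\rm Sp}_2(\mathcal{H})$ under adjoints (its Proposition \ref{est adjunta}): since each $x_n^*$ is also symplectic, the second relation $x_nJx_n^*=J$ holds and passes to the limit along with $x_n^*Jx_n=J$, so the limit $x_0$ satisfies both identities and one can simply \emph{write down} the inverse, $x_0^{-1}:=-Jx_0^*J$, verifying $x_0^{-1}x_0=x_0x_0^{-1}=1$ algebraically. Your route instead uses that $a\in\mathcal{B}_2(\mathcal{H})$ is compact, so $g$ is Fredholm of index zero, and then extracts injectivity from $g^*Jg=J$; all steps (boundedness of the Cauchy sequence, norm convergence of $g_n^*Jg_n$, closed range plus trivial kernel and cokernel implying bounded invertibility) are sound. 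The trade-off: your argument leans on the Hilbert--Schmidt (hence compact) nature of the perturbation, so it is genuinely tied to the restricted group; the paper's argument is purely algebraic and would show more generally that any operator-norm limit of symplectic operators is again symplectic, since both defining relations survive the limit and furnish an explicit inverse. Conversely, your Fredholm argument only needs the single relation $g^*Jg=J$ at the limit, whereas the paper needs to know the adjoint relation as well --- a fact it must establish separately via Proposition \ref{est adjunta}.
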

\begin{proof} Let $(x_n)\subset {\rm Sp}_2(\mathcal{H})$ be a Cauchy sequence, then $x_n-1$ is a Cauchy sequence in $\mathcal{B}_2(\mathcal{H})$. From this, we can take $x \in \mathcal{B}_2(\mathcal{H})$ such that  $x_n\longrightarrow 1+x:=x_0$ in $\|.\|_2$. It is clear that $x_0$ verifies the algebraic relation $x_0^{*}Jx_0=J$; to complete the proof we will see that $x_0$ is invertible. Indeed, from $x_n^{*} \in {\rm Sp}_2(\mathcal{H})$ we have  $x_nJx_n^{*}=J$, then this relation is transferred through the limit to $x_0$. We can now define the inverse of $x_0$ as $x_0^{-1}:=-Jx_0^{*}J$, it verifies:   
$$x_0^{-1}x_0=-Jx_0^{*}Jx_0=1  \ \ \mbox{and}  \  \  x_0x_0^{-1}=x_0(-Jx_0^{*}J)=1.$$
\end{proof}

\subsection{Differentiable structure}

Now we will show that ${\rm Sp}_2(\mathcal{H})$ has differentiable structure. Let us denote $$\mathfrak{sp}_2(\mathcal{H})=\left\{x \in \mathcal{B}_2(\mathcal{H}): xJ=-Jx^*\right\}.$$ It is clear that $\mathfrak{sp}_2(\mathcal{H})$ is a Banach-Lie subalgebra of $\mathcal{B}_2(\mathcal{H})$. 

\begin{lem}${\rm Sp}_2(\mathcal{H})$ is a Banach-Lie group.
\end{lem}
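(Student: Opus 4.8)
The plan is to exhibit ${\rm Sp}_2(\mathcal{H})$ as a Banach-Lie group by constructing an explicit chart around the identity from the Lie algebra $\mathfrak{sp}_2(\mathcal{H})$, and then use left translations to cover the whole group. The natural candidate for this chart is the exponential map $\exp\colon \mathfrak{sp}_2(\mathcal{H})\to {\rm Sp}_2(\mathcal{H})$. The first thing I would verify is that the exponential is well-defined on the algebra, i.e. that $\exp(x)\in{\rm Sp}_2(\mathcal{H})$ whenever $x\in\mathfrak{sp}_2(\mathcal{H})$. The symplectic condition is straightforward: from $xJ=-Jx^*$ one gets $x^*J=-Jx$, hence $\exp(x)^*J=\exp(x^*)J=J\exp(-x)=J\exp(x)^{-1}$, which is exactly the relation $\exp(x)^*J\exp(x)=J$ defining ${\rm Sp}(\mathcal{H})$. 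The perturbation condition is equally clean: since $x\in\mathcal{B}_2(\mathcal{H})$ and $\mathcal{B}_2(\mathcal{H})$ is a (complete) ideal, the series $\exp(x)-1=\sum_{k\geq 1}x^k/k!$ converges in the $2$-norm, so $\exp(x)-1\in\mathcal{B}_2(\mathcal{H})$.

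Next I would establish that $\exp$ is a local diffeomorphism from a neighborhood of $0$ in $\mathfrak{sp}_2(\mathcal{H})$ onto a neighborhood of $1$ in ${\rm Sp}_2(\mathcal{H})$. The key point is that the ordinary exponential map on $GL(\mathcal{H})$, realized on the Banach algebra $1+\mathcal{B}_2(\mathcal{H})$, is already known to be a local diffeomorphism with analytic inverse given by the logarithm series $\log(1+y)=\sum_{k\geq 1}(-1)^{k+1}y^k/k$, which converges in $\|\cdot\|_2$ for $\|y\|<1$. So I would take a ball $\|x\|_2<r$ small enough that the logarithm is defined and check that $\log$ maps a neighborhood of $1$ in ${\rm Sp}_2(\mathcal{H})$ back into $\mathfrak{sp}_2(\mathcal{H})$. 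This is where the main work lies: one must show that if $g\in{\rm Sp}_2(\mathcal{H})$ is close to $1$ then $\log(g)\in\mathfrak{sp}_2(\mathcal{H})$, i.e. that $\log(g)$ satisfies the algebraic relation $\log(g)J=-J\log(g)^*$. I would derive this from the functional calculus: the relation $g^*Jg=J$ says $J$ conjugates $g$ into $(g^*)^{-1}$, so $Jg J^{-1}=(g^*)^{-1}$, and applying $\log$ (which commutes with this conjugation) yields $J\log(g)J^{-1}=-\log(g^*)=-\log(g)^*$, precisely the Lie algebra condition. Thus $\log$ restricts to a chart $\{g\in{\rm Sp}_2(\mathcal{H}):\|g-1\|_2<\varepsilon\}\to\mathfrak{sp}_2(\mathcal{H})$.

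The hardest part, technically, will be keeping careful track of the topology: I must ensure that the $\|\cdot\|_2$ topology on ${\rm Sp}_2(\mathcal{H})$ and the subspace topology used in the charts agree, and that the transition maps obtained by left translation, $g\mapsto g_0 g$, are analytic as maps on $1+\mathcal{B}_2(\mathcal{H})$. Since $\mathcal{B}_2(\mathcal{H})$ is a Banach $*$-algebra under the $2$-norm and left multiplication by a fixed element is bounded and analytic, the transition functions $x\mapsto \log(g_0\exp(x))$ are analytic on their domains, giving a compatible analytic atlas. Finally I would verify the two group-operation conditions: multiplication and inversion are analytic in these charts. Inversion is immediate from $g^{-1}=-Jg^*J$ (as used in the proof of Proposition \ref{sp2cerrado}), which is manifestly analytic in the $2$-norm; multiplication is bilinear and bounded on $\mathcal{B}_2(\mathcal{H})$, hence analytic. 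Assembling these pieces realizes ${\rm Sp}_2(\mathcal{H})$ as a Banach-Lie group modeled on $\mathfrak{sp}_2(\mathcal{H})$, with the expected exponential chart at the identity.
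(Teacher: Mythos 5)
Your proposal is correct and takes essentially the same route as the paper: verify that $\exp$ carries $\mathfrak{sp}_2(\mathcal{H})$ into ${\rm Sp}_2(\mathcal{H})$, use the logarithm series to get a chart near the identity in the $\|\cdot\|_2$ topology, and propagate it by left translation. Your conjugation argument ($g^*Jg=J$ gives $JgJ^{-1}=(g^*)^{-1}$, hence $J\log(g)J^{-1}=-\log(g)^*$) is a welcome filling-in of the step the paper only asserts, namely that $\log(g)$ satisfies $xJ=-Jx^*$ for $g\in{\rm Sp}_2(\mathcal{H})$ close to $1$.
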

\begin{proof}Let $\exp: \mathcal{B}_2(\mathcal{H}) \rightarrow 1+\mathcal{B}_2(\mathcal{H})$ be $\exp(x)=e^x$ the exponential map. If we compute the exponential on $\mathfrak{sp}_2(\mathcal{H})$, its image belongs in ${\rm Sp}_2(\mathcal{H})$. Indeed, if $x$ verifies $xJ=-Jx^*$ then $e^xJ=Je^{-x^*}=J(e^{x^*})^{-1}$ and thus $e^{x}Je^{x^*}=J$; moreover we have that $e^x-1=x+x^2/2+... \in \mathcal{B}_2(\mathcal{H})$. Since the exponential is a local diffeomorphism ($d_0\exp=Id$) there exists $r<1$ such that $$U=\left\{x=\log(g):\Vert g-1\Vert_2<r\right\}\stackrel{\exp}\longrightarrow V=\left\{g \in 1+\mathcal{B}_2(\mathcal{H}): \Vert g-1\Vert_2<r\right\}$$ 
is an analytic diffeomorphism.

On the other hand if $g\in {\rm Sp}_2(\mathcal{H})$ meets $\Vert g-1\Vert_2<r$  ($r<1$) the exponential is a diffeomorphism and then its inverse is given by the logarithmic series $x=\log(g)=\sum_{n=1}^{\infty}(-1)^n\frac{(1-g)^n}{n} \in \mathcal{B}_2(\mathcal{H})$ and satisfies the condition $xJ=-Jx^*$. 
Therefore $\exp$ is one to one between $U\cap \mathfrak{sp}_2(\mathcal{H})$ and $V\cap {\rm Sp}_2(\mathcal{H})$. We have found a local chart around 1. This construction can be translated to any point in ${\rm Sp}_2(\mathcal{H})$ using the left action of ${\rm Sp}_2(\mathcal{H})$ on itself.
\end{proof}

Since the exponential map $\exp : \mathcal{B}_2(\mathcal{H}_J)_{ah} \rightarrow U_2(\mathcal{H}_J)$ is surjective (see \cite{Andruchow2}), it is clear that $\exp(\mathfrak{sp}_2(\mathcal{H})_{ah})=U_2(\mathcal{H}_J)$.
\begin{prop}\label{subgr} The unitary subgroup $U_2(\mathcal{H}_J)$ is a Lie-subgroup of ${\rm Sp}_2(\mathcal{H})$.
\end{prop}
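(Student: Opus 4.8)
The plan is to exhibit $U_2(\mathcal{H}_J)$ near the identity as the image, under the exponential chart already constructed, of a \emph{closed complemented} Lie subalgebra of $\mathfrak{sp}_2(\mathcal{H})$, and then to spread this chart over the whole group by left translation. The natural candidate for the Lie algebra is $\mathfrak{sp}_2(\mathcal{H})_{ah}$: if $x^*=-x$ then the defining relation $xJ=-Jx^*$ becomes $xJ=Jx$, so $\mathfrak{sp}_2(\mathcal{H})_{ah}$ is exactly the space of anti-Hermitian Hilbert-Schmidt operators commuting with $J$, i.e. the tangent space at $1$ of $U_2(\mathcal{H}_J)$. The crucial structural input, which I would establish first, is the topological direct sum decomposition
$$\mathfrak{sp}_2(\mathcal{H})=\mathfrak{sp}_2(\mathcal{H})_h\oplus\mathfrak{sp}_2(\mathcal{H})_{ah}.$$

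To obtain this decomposition I would check that $\mathfrak{sp}_2(\mathcal{H})$ is stable under the adjoint, in the same spirit as Proposition \ref{est adjunta}. Concretely, from $xJ=-Jx^*$ one multiplies by $J$ to get $x=Jx^*J$, whence $JxJ=x^*$ and then $x^*J=-Jx$, so $x^*\in\mathfrak{sp}_2(\mathcal{H})$. Since $\mathfrak{sp}_2(\mathcal{H})$ is a real linear subspace, the Hermitian and anti-Hermitian parts $\tfrac12(x+x^*)$ and $\tfrac12(x-x^*)$ again lie in it, which gives the splitting above; both summands are closed, being intersections of the closed subspace $\mathfrak{sp}_2(\mathcal{H})$ with the closed Hermitian and anti-Hermitian subspaces of $\mathcal{B}_2(\mathcal{H})$. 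A one-line bracket computation shows $\mathfrak{sp}_2(\mathcal{H})_{ah}$ is a Lie subalgebra. This is precisely the ``closed and complemented subalgebra'' hypothesis required by the standard Banach-Lie subgroup criterion (cf. \cite{Harpe}).

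Next I would verify that the ambient chart restricts correctly. Using the chart $\exp\colon U\cap\mathfrak{sp}_2(\mathcal{H})\to V\cap{\rm Sp}_2(\mathcal{H})$ of the previous lemma, I claim that for $\Vert g-1\Vert_2<r$ one has $g\in U_2(\mathcal{H}_J)$ if and only if $\log g\in\mathfrak{sp}_2(\mathcal{H})_{ah}$. One implication is immediate from the already-noted identity $\exp(\mathfrak{sp}_2(\mathcal{H})_{ah})=U_2(\mathcal{H}_J)$. For the converse, if $g$ is unitary and close to $1$ then, since the logarithmic series has real coefficients and $g^*=g^{-1}$, we get $(\log g)^*=\log(g^*)=\log(g^{-1})=-\log g$, so $\log g$ is anti-Hermitian; and it lies in $\mathfrak{sp}_2(\mathcal{H})$ because $g\in{\rm Sp}_2(\mathcal{H})$. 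Hence $\exp$ carries a neighborhood of $0$ in $\mathfrak{sp}_2(\mathcal{H})_{ah}$ diffeomorphically onto a neighborhood of $1$ in $U_2(\mathcal{H}_J)$, realized inside the ambient chart as the linear slice on which the Hermitian part vanishes. Left translation by elements of $U_2(\mathcal{H}_J)$, which preserves the subgroup, then transports this chart to every point, exhibiting $U_2(\mathcal{H}_J)$ as an embedded Banach-Lie subgroup with Lie algebra $\mathfrak{sp}_2(\mathcal{H})_{ah}$.

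I expect the main obstacle to be concentrated in the first step: proving adjoint-stability and hence the complemented splitting, which is the genuine content that upgrades ``closed subgroup'' to ``Banach-Lie subgroup''. Once the splitting and the surjectivity identity are in hand, verifying that the exponential chart restricts to a slice and translating it over the group are routine.
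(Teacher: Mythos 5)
Your proof is correct and follows essentially the same route as the paper: both identify $U_2(\mathcal{H}_J)$ near the identity with $\exp$ of $\mathfrak{sp}_2(\mathcal{H})_{ah}$ (your computation $(\log g)^*=\log(g^{-1})=-\log g$ is the same step the paper phrases as $e^ye^{y^*}=1\Rightarrow y=-y^*$ via injectivity of $\exp$), and then globalize, the paper by citing Prop.~4.4 of \cite{Beltita} where you translate the chart by hand. Your explicit verification of adjoint-stability of $\mathfrak{sp}_2(\mathcal{H})$ and of the complemented splitting $\mathfrak{sp}_2(\mathcal{H})=\mathfrak{sp}_2(\mathcal{H})_h\oplus\mathfrak{sp}_2(\mathcal{H})_{ah}$ is a welcome filling-in of details the paper uses only implicitly (e.g.\ when it asserts $-y^*\in U$).
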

\begin{proof} Let $U$ be a neighboord of 0 in $\mathfrak{sp}_2(\mathcal{H})$ such that the exponential map is a diffeomorphism, we can assume that $U=\left\lbrace x \in \mathfrak{sp}_2(\mathcal{H}): \| x\|_2<r\right\rbrace $ for a suitable $r>0$. It is clear that we always have $$\exp( \mathfrak{sp}_2(\mathcal{H})_{ah}\cap U)\subseteq U_2(\mathcal{H}_J)\cap \exp(U).$$
Conversely, suppose that $g\in U_2(\mathcal{H}_J)\cap \exp(U)$ then $g=e^y$ for some $y \in U$; hence $1=gg^*=e^ye^{y^*}$ and then $e^y=e^{-y^*}$. Since $-y^{*}$ also belongs in $U$ and the exponential is one to one, we have that $y=-y^*$ and thus $y\in \mathfrak{sp}_2(\mathcal{H})_{ah}$. Then we have $\exp( \mathfrak{sp}_2(\mathcal{H})_{ah}\cap U)=U_2(\mathcal{H}_J)\cap \exp(U)$ and this implies that $U_2(\mathcal{H}_J)$ is a Lie-subgroup of ${\rm Sp}_2(\mathcal{H})$ (see Prop. 4.4 in the book \cite{Beltita}).

\end{proof}

\section{The left invariant metric of  ${\rm Sp}_2(\mathcal{H})$}
Again, using the left action of ${\rm Sp}_2(\mathcal{H})$ on itself, the tangent space at $g \in {\rm Sp}_2(\mathcal{H})$ is $$(T{\rm Sp}_2(\mathcal{H}))_g=g.\mathfrak{sp}_2(\mathcal{H})\subset \mathcal{B}_2(\mathcal{H}).$$
We introduce the left invariant metric for $v \in (T{\rm Sp}_2(\mathcal{H}))_g$ by $$\mathcal{I}(g,v):=\Vert g^{-1}v\Vert_2.$$ This metric comes from the inner product $$\left\langle v,w\right\rangle_g=\left\langle g^{-1}v,g^{-1}w\right\rangle=tr((gg^*)^{-1}vw^*).$$
In the followings steps we recall the metric spray of $GL_2(\mathcal{H})$ with the left invariant metric, for more details see \cite{Andruchow1}. We will follow the notation of Lang's book \cite{Lang}. For the metric expression $g \longmapsto I_g$ where $I_gv=(gg^*)^{-1}v$ we obtain the metric spray (see \cite{Larotonda2})
$$F_g(v)=vg^{-1}v+gv^*I_gv-vv^*(g^*)^{-1}.$$ Using the polarization formula $$\Gamma_g(v,w)=1/2\left\{F_g(v+w)-F_g(v)-F_g(w)\right\}$$ we obtain the bilinear form associated to the spray, that is for $g\in GL_2(\mathcal{H})$ and $v=gx$,$ \ w=gy \in T_gGL_2(\mathcal{H})$, $$2g^{-1}\Gamma_g(gx,gy)=xy+yx+x^*y+y^*x-xy^*-yx^*.$$ The covariant derivative of the spray is $D_t\eta=\dot{\eta}-\Gamma(\eta,\dot{\alpha})$ where $\alpha:(-\epsilon,\epsilon) \rightarrow GL_2(\mathcal{H})$ is any smooth curve and $\eta$ is a tangent field along $\alpha$.  
\begin{prop} If $\eta$ is a field along a curve $\alpha$ we define $\beta=\alpha^{-1}\dot{\alpha}$ and $\mu=\alpha^{-1}\eta$, the fields at the identity, then the covariant derivate can be expressed by $$\alpha^{-1}D_t\eta=\dot{\mu}+1/2\lbrace [\beta,\mu]+[\beta,\mu^{*}]+[\mu,\beta^{*}]\rbrace.$$   
\end{prop}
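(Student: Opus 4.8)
The plan is to carry out a direct substitution into the two ingredients already established: the definition $D_t\eta=\dot{\eta}-\Gamma(\eta,\dot{\alpha})$ and the explicit bilinear form $2g^{-1}\Gamma_g(gx,gy)=xy+yx+x^*y+y^*x-xy^*-yx^*$. The entire statement is just a reorganization of these after pulling everything back to the identity by left translation with $\alpha^{-1}$, so the argument is computational rather than conceptual.

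First I would differentiate the defining relation $\eta=\alpha\mu$. Using the Leibniz rule together with $\dot{\alpha}=\alpha\beta$ gives $\dot{\eta}=\dot{\alpha}\mu+\alpha\dot{\mu}=\alpha\beta\mu+\alpha\dot{\mu}$, and hence $\alpha^{-1}\dot{\eta}=\beta\mu+\dot{\mu}$. This already produces the $\dot{\mu}$ term appearing in the target formula, along with an extra summand $\beta\mu$ that will later have to combine with the contribution coming from the spray.

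Next I would evaluate the Christoffel term. Since $\eta=\alpha\mu$ and $\dot{\alpha}=\alpha\beta$, I apply the bilinear form formula with $g=\alpha$, $x=\mu$ and $y=\beta$, which yields $\alpha^{-1}\Gamma(\eta,\dot{\alpha})=\tfrac12\left(\mu\beta+\beta\mu+\mu^*\beta+\beta^*\mu-\mu\beta^*-\beta\mu^*\right)$. The only point requiring a little care is to match the roles of $x$ and $y$ with $\mu$ and $\beta$ correctly; the symmetry of $\Gamma$ inherited from the polarization formula makes the order of the two arguments immaterial, but one must keep the starred and unstarred factors straight when reading off the six products.

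Finally I would subtract, writing $\alpha^{-1}D_t\eta=(\beta\mu+\dot{\mu})-\alpha^{-1}\Gamma(\eta,\dot{\alpha})$ and collecting terms. The $\beta\mu$ obtained in the first step cancels exactly half of the $\beta\mu$ occurring in the spray term, so that $\tfrac12\beta\mu$ survives; regrouping the remaining products then gives $\tfrac12(\beta\mu-\mu\beta)+\tfrac12(\beta\mu^*-\mu^*\beta)+\tfrac12(\mu\beta^*-\beta^*\mu)$, which is precisely $\tfrac12\left([\beta,\mu]+[\beta,\mu^*]+[\mu,\beta^*]\right)$, as claimed. There is no genuine obstacle beyond the bookkeeping; the one step to watch is this partial cancellation of the $\beta\mu$ terms, since neglecting the contribution of $\dot{\eta}$ would spoil the coefficient of the first commutator.
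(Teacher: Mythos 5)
Your proposal is correct and follows essentially the same route as the paper's proof: differentiate $\eta=\alpha\mu$ via the product rule to get $\alpha^{-1}\dot{\eta}=\beta\mu+\dot{\mu}$, then subtract the Christoffel term $\alpha^{-1}\Gamma(\alpha\mu,\alpha\beta)$ read off from the polarization formula. In fact you supply the final term-collection (the partial cancellation of $\beta\mu$ and the regrouping into commutators) that the paper leaves to the reader.
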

\begin{proof} From the covariant derivate formula, we have $$\alpha^{-1}D_t\eta=\alpha^{-1}\dot{\eta}-\alpha^{-1}\Gamma(\alpha \mu,\alpha\beta).$$ If we write $\eta=\alpha\mu$ and $\dot{\alpha}=\alpha\beta$, using the product rule to differentiate $\eta$ we obtain $$\alpha^{-1}\dot{\eta}=\alpha^{-1}\dot{\alpha}\mu+\dot{\mu}=\beta\mu+\dot{\mu}.$$ 
\end{proof}
The above formula can be restricted to ${\rm Sp}_2(\mathcal{H})$ preserving the tangent fields, that is, ${\rm Sp}_2(\mathcal{H}) \subset GL_2(\mathcal{H})$ is totally geodesic.
\begin{prop}If $\alpha \subset {\rm Sp}_2(\mathcal{H})$ is a curve and $\eta$ a field along $\alpha$ then $$D_t\eta \in (T{\rm Sp}_2(\mathcal{H}))_\alpha=\alpha.\mathfrak{sp}_2(\mathcal{H}).$$
 \end{prop}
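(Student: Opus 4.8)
The plan is to show that the covariant derivative $D_t\eta$, which lives in $\mathcal{B}_2(\mathcal{H})$ since $\mathrm{Sp}_2(\mathcal{H})\subset GL_2(\mathcal{H})$, actually lies in the tangent space $\alpha\cdot\mathfrak{sp}_2(\mathcal{H})$. Using the preceding proposition, it suffices to verify that $\mu'=\alpha^{-1}D_t\eta$ belongs to the Lie algebra $\mathfrak{sp}_2(\mathcal{H})$ at the identity, i.e. that $\mu'J=-J(\mu')^*$. So first I would write out $\mu'=\dot\mu+\tfrac12\{[\beta,\mu]+[\beta,\mu^*]+[\mu,\beta^*]\}$ using the formula just proved, where $\beta=\alpha^{-1}\dot\alpha$ and $\mu=\alpha^{-1}\eta$ are both curves in $\mathfrak{sp}_2(\mathcal{H})$ (since $\alpha\subset\mathrm{Sp}_2(\mathcal{H})$ and $\eta$ is tangent along it).

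The key input is that $\mathfrak{sp}_2(\mathcal{H})$ is characterized by the relation $xJ=-Jx^*$, equivalently $x^*=-JxJ^{-1}=JxJ$ (using $J^{-1}=-J$). I would record the consequence that for $x\in\mathfrak{sp}_2(\mathcal{H})$ we have $x^*=JxJ$, so conjugation by $J$ sends $x$ to $x^*$. The next step is to check the three structural facts needed: (i) $\mathfrak{sp}_2(\mathcal{H})$ is closed under the bracket, so $[\beta,\mu]\in\mathfrak{sp}_2(\mathcal{H})$ — this is immediate since it is a Lie subalgebra, as noted in the excerpt; (ii) the derivative $\dot\mu$ stays in $\mathfrak{sp}_2(\mathcal{H})$, because $\mathfrak{sp}_2(\mathcal{H})$ is a closed linear subspace and differentiation of a curve lying in a closed subspace lands back in that subspace; and (iii) the two terms involving adjoints, $[\beta,\mu^*]$ and $[\mu,\beta^*]$, also land in $\mathfrak{sp}_2(\mathcal{H})$.

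The main obstacle is precisely point (iii), since $\mu^*$ and $\beta^*$ need not lie in $\mathfrak{sp}_2(\mathcal{H})$ individually; rather, for $x\in\mathfrak{sp}_2(\mathcal{H})$ one has $x^*=JxJ=-JxJ^{-1}$, which is $-J x J^{-1}$, the negative of conjugation by $J$. I would therefore compute directly whether the combination $[\beta,\mu^*]+[\mu,\beta^*]$ satisfies the defining relation. Writing $z=[\beta,\mu^*]+[\mu,\beta^*]$, I would verify $zJ=-Jz^*$ by substituting $\beta^*=J\beta J$ and $\mu^*=J\mu J$, expanding the commutators, and using $J^2=-1$ to reorganize the terms; the antisymmetric pairing of the two commutators is exactly what should produce cancellation and leave a result obeying the symplectic relation. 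Concretely, one expects $z^*=(\mu^*\beta-\beta\mu^*)^*+(\beta^*\mu-\mu\beta^*)^*$, and after conjugating and applying $J\cdot J$ the required identity $zJ=-Jz^*$ should fall out. Once each of the three pieces (the bracket $[\beta,\mu]$, the derivative $\dot\mu$, and the combination $z$) is shown to lie in $\mathfrak{sp}_2(\mathcal{H})$, linearity gives $\mu'\in\mathfrak{sp}_2(\mathcal{H})$, hence $D_t\eta=\alpha\mu'\in\alpha\cdot\mathfrak{sp}_2(\mathcal{H})=(T\mathrm{Sp}_2(\mathcal{H}))_\alpha$, which is the claim and establishes that $\mathrm{Sp}_2(\mathcal{H})$ is totally geodesic in $GL_2(\mathcal{H})$.
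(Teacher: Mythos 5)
Your reduction is exactly the paper's: pass to $\mu'=\alpha^{-1}D_t\eta$, use the formula from the preceding proposition, and check that $\dot\mu$, $[\beta,\mu]$, $[\beta,\mu^*]$ and $[\mu,\beta^*]$ all lie in $\mathfrak{sp}_2(\mathcal{H})$. Your points (i) and (ii) are fine. The flaw is in point (iii), which you yourself call the main obstacle: the claim that ``$\mu^*$ and $\beta^*$ need not lie in $\mathfrak{sp}_2(\mathcal{H})$ individually'' is false, and the identity $x^*=JxJ$ that you derive is precisely what disproves it. Indeed, for $x\in\mathfrak{sp}_2(\mathcal{H})$,
$$x^*J=(JxJ)J=JxJ^2=-Jx=-J(x^*)^*,$$
so $x^*$ again satisfies the defining relation and $\mathfrak{sp}_2(\mathcal{H})$ is closed under adjoints --- as it must be, being the Lie algebra of a group stable under the adjoint operation (Proposition \ref{est adjunta}). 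Hence $[\beta,\mu^*]$ and $[\mu,\beta^*]$ lie in $\mathfrak{sp}_2(\mathcal{H})$ term by term, simply by closure under the bracket; no cancellation between the two commutators occurs or is needed. (In fact $[\beta,\mu^*]^*=[\mu,\beta^*]$, so $z$ is self-adjoint, and your displayed guess for $z^*$ also carries a sign slip.)

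Because of this, your proof stops short exactly where it matters: the verification that $z=[\beta,\mu^*]+[\mu,\beta^*]$ obeys $zJ=-Jz^*$ is never carried out; you only predict that it ``should fall out'' from an antisymmetric pairing, and that heuristic is wrong. The computation would in fact succeed if executed (because each summand is separately in the algebra), so this is a fixable gap rather than a dead end: insert the one-line observation above that $\mathfrak{sp}_2(\mathcal{H})$ is $*$-closed, and your argument collapses into the paper's proof, which handles all three brackets at once by noting that $\mathfrak{sp}_2(\mathcal{H})$ is a ($*$-closed) Banach--Lie algebra.
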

\begin{proof} Let $\beta=\alpha^{-1}\dot{\alpha}$ and $\mu=\alpha^{-1}\eta$ be the fields moved to $\mathfrak{sp}_2(\mathcal{H})$, we will show that $\alpha^{-1}D_t\eta \subset \mathfrak{sp}_2(\mathcal{H})$. Indeed $\mu$ verifies $\mu J=-J\mu^*$, if we derive, we obtain $\dot{\mu}J=-J\dot{\mu}^*$ and  $\dot{\mu}$ is a Hilbert-Schmidt operator and that lies in $\mathfrak{sp}_2(\mathcal{H})$. The brackets $[\beta,\mu],  [\beta,\mu^{*}],  [\mu,\beta^{*}]$ are all in $\mathfrak{sp}_2(\mathcal{H})$ since it is a Banach-Lie algebra, then using the above proposition $$\alpha^{-1}D_t\eta= \dot{\mu}+1/2\lbrace [\beta,\mu]+[\beta,\mu^{*}]+[\mu,\beta^{*}]\rbrace \subset \mathfrak{sp}_2(\mathcal{H}).$$    
\end{proof}

This shows that the Riemannian connection given by the left invariant metric in the group ${\rm Sp}_2(\mathcal{H})$ matches the one of $GL_2(\mathcal{H})$. Particularly the geodesics of ${\rm Sp}_2(\mathcal{H})$ are the same than those $GL_2(\mathcal{H})$; if $g_0 \in {\rm Sp}_2(\mathcal{H})$ and $g_0v_0 \in g_0.\mathfrak{sp}_2(\mathcal{H})$ are the initial position and the initial velocity then  $$\alpha(t)=g_0e^{tv_0^{*}}e^{t(v_0-v_0^{*})}\subset {\rm Sp}_2(\mathcal{H})$$ satisfies $D_t\dot{\alpha}=0$ (see \cite{Andruchow1}). In this context the Riemannian exponential for $g \in {\rm Sp}_2(\mathcal{H})$ is $$Exp_g(v)=ge^{v^{*}}e^{v-v^{*}}$$ with $v \in \mathfrak{sp}_2(\mathcal{H})$. 

\section{Metric structure in ${\rm Sp}_2^+(\mathcal{H})$}
\subsection{${\rm Sp}_2^+({\mathcal H})$ as a submanifold of $GL({\mathcal H})^+$}
It is not difficult to prove using the functional calculus that the exponential map can be restricted to the Lie-algebra $\mathfrak{sp}_2(\mathcal{H})_h$ making it diffeomorphic to ${\rm Sp}_2^+({\mathcal H})$; in this way $$\exp: \mathfrak{sp}_2(\mathcal{H})_h\longrightarrow {\rm Sp}_2^+({\mathcal H})$$ is a diffeomorphism.
From the stability of the adjoint operation in ${\rm Sp}_2(\mathcal{H})$ (Proposition \ref{est adjunta}) we can restrict the natural action of the invertible group to the set of positive invertible operators. 
\begin{lem} \label{acpos}The natural action $l: {\rm Sp}_2(\mathcal{H}) \times {\rm Sp}_2^+(\mathcal{H}) \longrightarrow {\rm Sp}_2^+({\mathcal H})$ given by $$(g,a) \longmapsto gag^{*}$$ is well defined and transitive. 
\end{lem}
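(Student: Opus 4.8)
The plan is to treat well-definedness and transitivity separately, with transitivity being the substantive part.

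\textbf{Well-definedness.} First I would show that $gag^{*}\in {\rm Sp}_2^+(\mathcal{H})$ whenever $g\in{\rm Sp}_2(\mathcal{H})$ and $a\in{\rm Sp}_2^+(\mathcal{H})\subset{\rm Sp}_2(\mathcal{H})$. Since ${\rm Sp}_2(\mathcal{H})$ is a group that is stable under the adjoint operation (the restricted analogue of Proposition \ref{est adjunta}, noted immediately after the definition of ${\rm Sp}_2(\mathcal{H})$), the operator $g^{*}$ is symplectic, and hence so is the product $gag^{*}$; moreover $gag^{*}-1\in\mathcal{B}_2(\mathcal{H})$ because $\mathcal{B}_2(\mathcal{H})$ is an ideal. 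For positivity I would write $a=a^{1/2}a^{1/2}$ with $a^{1/2}$ Hermitian, so that $gag^{*}=(a^{1/2}g^{*})^{*}(a^{1/2}g^{*})\geq 0$, and invertibility of $g$ and $a$ upgrades this to strict positivity. Thus $gag^{*}\in{\rm Sp}_2^+(\mathcal{H})$.

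\textbf{Roots stay in the group.} The key preliminary fact for transitivity is that fractional powers of a positive symplectic operator remain in ${\rm Sp}_2^+(\mathcal{H})$. Here I would lean on the stated diffeomorphism $\exp:\mathfrak{sp}_2(\mathcal{H})_h\to{\rm Sp}_2^+(\mathcal{H})$: for $a\in{\rm Sp}_2^+(\mathcal{H})$ write $a=e^{x}$ with $x=\log a\in\mathfrak{sp}_2(\mathcal{H})_h$. Then for every real $s$ the power $a^{s}=e^{sx}$ by functional calculus, and since $\mathfrak{sp}_2(\mathcal{H})_h$ is a linear space, $sx\in\mathfrak{sp}_2(\mathcal{H})_h$, so $a^{s}\in{\rm Sp}_2^+(\mathcal{H})$. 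In particular both $a^{1/2}$ and $a^{-1/2}$ lie in ${\rm Sp}_2^+(\mathcal{H})$.

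\textbf{Transitivity.} With this in hand, given $a,b\in{\rm Sp}_2^+(\mathcal{H})$ I would simply set $g=b^{1/2}a^{-1/2}$, which belongs to ${\rm Sp}_2(\mathcal{H})$ as a product of elements of ${\rm Sp}_2^+(\mathcal{H})\subset{\rm Sp}_2(\mathcal{H})$. Since $a^{-1/2}$ and $b^{1/2}$ are Hermitian, one has $g^{*}=a^{-1/2}b^{1/2}$, whence $gag^{*}=b^{1/2}a^{-1/2}\,a\,a^{-1/2}b^{1/2}=b^{1/2}b^{1/2}=b$. Therefore the orbit of $a$ is all of ${\rm Sp}_2^+(\mathcal{H})$, and the action is transitive. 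The main obstacle in this argument is precisely the middle step: one must be sure that the inverse square root $a^{-1/2}$ is genuinely symplectic and a Hilbert--Schmidt perturbation of the identity. The exponential diffeomorphism onto ${\rm Sp}_2^+(\mathcal{H})$ is exactly what makes this clean, transferring the root operation to scalar multiplication inside the linear space $\mathfrak{sp}_2(\mathcal{H})_h$; without it one would have to verify directly, through the functional calculus, that the relation $g^{*}Jg=J$ survives the extraction of roots.
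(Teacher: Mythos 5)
Your proof is correct and follows essentially the same route as the paper: well-definedness via stability of ${\rm Sp}_2(\mathcal{H})$ under adjoints, and transitivity by conjugating with $g=b^{1/2}a^{-1/2}$, which is exactly the paper's $g=e^{x/2}e^{-y/2}$ once $a=e^{y}$, $b=e^{x}$ are written through the exponential diffeomorphism onto $\mathfrak{sp}_2(\mathcal{H})_h$. Your explicit justification that fractional powers stay in ${\rm Sp}_2^+(\mathcal{H})$ is the same mechanism the paper uses implicitly by working with exponentials from the start.
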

\begin{proof} The map $(g,a) \longmapsto gag^{*}$ is well defined as a direct consequence of Proposition \ref{est adjunta}; it is clear that $gag^* \in {\rm Sp}_2(\mathcal{H})$ and it is positive. If $X,Y \in {\rm Sp}_2^+({\mathcal H})$, we can assume that $X=e^x, \ Y=e^y$ where $x,y \in {\mathfrak{sp}_2(\mathcal{H})_h}$; then if we consider the operator $g=e^{x/2}e^{-y/2} \in {\rm Sp}_2(\mathcal{H})$ it verifies that $X=gYg^*$.
\end{proof}
Now we endow the closed submanifold ${\rm Sp}_2^+({\mathcal H})$ with a Riemannian metric; if $a \in {\rm Sp}_2^+({\mathcal H})$ and $x \in T_a{\rm Sp}_2^+({\mathcal H})=\left\{a^{1/2}\ln(a^{-1/2}qa^{-1/2})a^{1/2}: q \in {\rm Sp}_2^+({\mathcal H})\right\}$ we put the metric of positive operators (see \cite{Corach1} and \cite{Mostow}) given by $$\mathfrak{p}(a,x):=\|a^{-1/2}xa^{-1/2}\|_2.$$ 

\begin{rem} The above metric is invariant under the action of the group ${\rm Sp}_2(\mathcal{H})$, that is: if $x\in T_a{\rm Sp}_2^+({\mathcal H})$ then 
 $$\mathfrak{p}(gag^{*},gxg^{*})=\mathfrak{p}(a,x).$$
\end{rem}
 
The curve $\gamma_{pq}(t)=p^{1/2}(p^{-1/2}qp^{-1/2})^{t}p^{1/2}=p^{1/2}e^{t(\ln(p^{-1/2}qp^{-1/2}))}p^{1/2} \subset{\rm Sp}_2^+({\mathcal H})$ joins $p$ to $q$ and its length is $$L_{\mathfrak{p}}(\gamma_{pq})=\|\ln(p^{-1/2}qp^{-1/2})\|_2.$$ This curve is minimal among all curves in ${\rm Sp}_2^+({\mathcal H})$ that join $p$ to $q$. We will give a short proof of this fact, the key is the following inequality.

\begin{rem}(See \cite{Hiai}) If $d\exp_x$ denotes the differential of exponential at $x$ of the usual exponential map, then 
\begin{equation}
\mathfrak{p}(e^x ,d \exp_x(y))=\Vert e^{-x/2}d \exp_x(y)e^{-x/2}\Vert_2\geq \Vert y\Vert_2. \label{EMI} 
\end{equation}
for any $x,y\in \mathcal{B}_2(\mathcal{H})_h$.
\end{rem}

\begin{teo}\label{minpos} Let $p,q \in {\rm Sp}_2^+({\mathcal H})$ then $\gamma_{pq}\subset {\rm Sp}_2^+({\mathcal H})$ has minimal length among all curves that joins $p$ to $q$. 
\end{teo}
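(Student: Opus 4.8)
The plan is to use the ${\rm Sp}_2(\mathcal{H})$-invariance of the metric $\mathfrak{p}$ recorded in the Remark above to move the base point to $1$, and then to bound the length of an arbitrary competing curve from below by the Hiai inequality \eqref{EMI}.

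First I would reduce to the case $p=1$. Since $p\in{\rm Sp}_2^+({\mathcal H})$ we may write $p=e^x$ with $x\in\mathfrak{sp}_2(\mathcal{H})_h$, so that $g:=p^{-1/2}=e^{-x/2}\in{\rm Sp}_2^+({\mathcal H})\subseteq{\rm Sp}_2(\mathcal{H})$ and $g=g^*$. The map $\delta\mapsto g\delta g^*$ is a bijection from the set of curves joining $p$ to $q$ onto the set of curves joining $1$ to $r:=p^{-1/2}qp^{-1/2}\in{\rm Sp}_2^+({\mathcal H})$ (the latter membership by Lemma \ref{acpos}), and by the invariance $\mathfrak{p}(gag^*,gxg^*)=\mathfrak{p}(a,x)$ it preserves the length $L_{\mathfrak{p}}$. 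A direct computation shows it carries $\gamma_{pq}$ to $\gamma_{1r}(t)=r^t=e^{t\ln r}$. Hence it suffices to prove that $\gamma_{1r}$, of length $\Vert\ln r\Vert_2$, is minimal among curves from $1$ to $r$.

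Next, let $\delta\subset{\rm Sp}_2^+({\mathcal H})$ be any piecewise smooth curve with $\delta(0)=1$ and $\delta(1)=r$. Using that $\exp:\mathfrak{sp}_2(\mathcal{H})_h\to{\rm Sp}_2^+({\mathcal H})$ is a diffeomorphism, I would set $x(t):=\ln\delta(t)\in\mathfrak{sp}_2(\mathcal{H})_h\subseteq\mathcal{B}_2(\mathcal{H})_h$, a piecewise smooth path with $x(0)=0$, $x(1)=\ln r$ and $\delta(t)=e^{x(t)}$. Differentiating gives $\dot\delta(t)=d\exp_{x(t)}(\dot x(t))$, so inequality \eqref{EMI}, applied pointwise with $x=x(t)$ and $y=\dot x(t)$, yields $\mathfrak{p}(\delta(t),\dot\delta(t))\ge\Vert\dot x(t)\Vert_2$. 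Integrating, and using that the $\Vert\cdot\Vert_2$-norm of an integral is at most the integral of the norm,
\[
L_{\mathfrak{p}}(\delta)=\int_0^1\mathfrak{p}(\delta(t),\dot\delta(t))\,dt\ge\int_0^1\Vert\dot x(t)\Vert_2\,dt\ge\Big\Vert\int_0^1\dot x(t)\,dt\Big\Vert_2=\Vert x(1)-x(0)\Vert_2=\Vert\ln r\Vert_2=L_{\mathfrak{p}}(\gamma_{1r}),
\]
which is the desired minimality.

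The genuine analytic content is concentrated in \eqref{EMI}, which is already available; what remains is essentially bookkeeping. The points to check carefully are that $x(t)=\ln\delta(t)$ is (piecewise) smooth and Hilbert--Schmidt — this is exactly where the global diffeomorphism $\exp:\mathfrak{sp}_2(\mathcal{H})_h\to{\rm Sp}_2^+({\mathcal H})$ is used — and that \eqref{EMI} is legitimately applicable, which holds because $x(t),\dot x(t)\in\mathcal{B}_2(\mathcal{H})_h$ by the inclusion $\mathfrak{sp}_2(\mathcal{H})_h\subseteq\mathcal{B}_2(\mathcal{H})_h$. I would also remark that the argument applies verbatim to piecewise smooth competitors, matching the definition of $d_{\mathfrak{p}}$.
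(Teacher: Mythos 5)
Your proposal is correct and follows essentially the same route as the paper: reduce to $p=1$ (the paper does this implicitly via the invariance remark, you spell it out with the conjugation by $p^{-1/2}$), pass to logarithmic coordinates $\delta(t)=e^{x(t)}$, apply the Hiai inequality \eqref{EMI} pointwise to get $\mathfrak{p}(\delta,\dot\delta)\geq\Vert\dot x\Vert_2$, and conclude by the norm-of-integral estimate. The only difference is presentational: you make the reduction step and the piecewise-smoothness bookkeeping explicit, which the paper leaves to the reader.
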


\begin{proof}
We can suppose that $p=1$, then $\gamma_{1q}(t)=e^{tx}$ where $x=\ln(q)$ and its length is $\Vert x\Vert_2=\Vert \ln(q)\Vert_2$. If $\alpha$ is another curve that joins the same points, then it can be written as $\alpha(t)=e^{\beta(t)}$ where $\beta(t)=\ln(\alpha(t)) \subset \mathfrak{sp}_2(\mathcal{H})_h$. Using the above remark we have $$L_{\mathfrak{p}}(\gamma_{1q})=\|x-0\|_2=\| \int_0^{1} \dot{\beta}(t) dt \|_2 \leq \int_0^{1} \|\dot{\beta}(t)\|_2 dt$$ and also  
$$\mathfrak{p}(\alpha,\dot{\alpha})=\mathfrak{p}\big(e^{\beta(t)},d\exp_{\beta(t)}(\dot{\beta}(t))\big)$$
$$=\|e^{-\beta(t)/2} d\exp_{\beta(t)}(\dot{\beta}(t)) e^{-\beta(t)/2} \|_2 \geq \|\dot{\beta}(t)\|_2.$$
\end{proof}
It can be shown that the metric space $({\rm Sp}_2^+({\mathcal H}),d_{\mathfrak{p}})$ is complete. This fact was proved in \cite{Corach2} or \cite{Larotonda} in another context; in this context we also can derive from (\ref{EMI}) the known inequality  $$d_{\mathfrak{p}}(p,q) \geq \|\log p - \log q\|_2 $$ for $p,q \in {\rm Sp}_2^+({\mathcal H})$; the proof of completeness can be adapted easily, therefore we omit them.

\subsection{${\rm Sp}_2^+({\mathcal H})$ as submanifold of the ambient space}

Here we will think ${\rm Sp}_2^+({\mathcal H})$ as a submanifold of the real Hilbert space $H_\mathbb{R}:=\mathbb{R}\oplus\mathcal{B}_2(\mathcal{H})_h$ with the natural inner product $$<\lambda+a,\mu+b>=\lambda\mu +Tr(b^*a).$$ From the action given by Lemma \ref{acpos} we can define for each $a \in {\rm Sp}_2^+({\mathcal H})$ the map $$\pi_a : {\rm Sp}_2(\mathcal{H}) \rightarrow {\rm Sp}_2^+({\mathcal H}), \ \ \pi_a(g)=gag^{*}.$$ Observe that, since the action is transitive this map is onto and as in the case of the full space of positive invertible operators ${\mathcal{B}(\mathcal{H})}^+$(see \cite{Corach3}), we have that $\sigma_a(b)=b^{1/2}a^{-1/2}$ defines a global smooth section of $\pi_a$. Note that this map is well defined and its image belongs clearly to ${\rm Sp}_2(\mathcal{H})$.

 If $g$ is any element in ${\rm Sp}_2^+({\mathcal H})$, we can consider the real linear map $$\Pi_g : H_\mathbb{R} \longrightarrow H_\mathbb{R}, \ x\longmapsto \frac{1}{2}\big(x+gJxJg\big).$$This map is well defined and a short computation shows that the range belongs to $\mathcal{B}_2(\mathcal{H})_h$.     

\begin{lem} \label{adj} The map $\Pi_g$ is idempotent and its range is $g^{1/2}\mathfrak{sp}_2(\mathcal{H})_hg^{1/2}$. Moreover, its adjoint map for the trace inner product is $\Pi_{g^{-1}}$. If $g=1$ this map is the orthogonal projection onto $\mathfrak{sp}_2(\mathcal{H})_h$.
\end{lem}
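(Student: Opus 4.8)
The plan is to reduce the entire lemma to the single identity $gJg=J$, which holds because $g\in{\rm Sp}_2^+({\mathcal H})$ is at once positive ($g^*=g$) and symplectic ($g^*Jg=J$). First I would record the two equivalent rewritings $Jg=g^{-1}J$ and $gJ=Jg^{-1}$, and observe that the same identity holds for the square root: since $\exp:\mathfrak{sp}_2(\mathcal{H})_h\to{\rm Sp}_2^+({\mathcal H})$ is a diffeomorphism we may write $g=e^{2w}$ with $w\in\mathfrak{sp}_2(\mathcal{H})_h$, so that $g^{1/2}=e^{w}$ again lies in ${\rm Sp}_2^+({\mathcal H})$ and therefore satisfies $g^{1/2}Jg^{1/2}=J$. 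These symplectic relations for $g$ and $g^{1/2}$ are the only structural inputs the proof needs.

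For idempotency I would expand directly, obtaining $\Pi_g^2(x)=\tfrac14\big(x+2\,gJxJg+gJgJxJgJg\big)$, and then simplify the last term using $gJgJ=J^2=-1$ and $JgJg=J^2=-1$, so that $gJgJxJgJg=(-1)x(-1)=x$; the expression collapses back to $\tfrac12(x+gJxJg)=\Pi_g(x)$. Being idempotent, $\Pi_g$ has range equal to its fixed-point set $\{x:gJxJg=x\}$, and this set automatically lies in $\mathcal{B}_2(\mathcal{H})_h$ by the remark preceding the lemma. To identify it with $g^{1/2}\mathfrak{sp}_2(\mathcal{H})_hg^{1/2}$ I would set $h=g^{1/2}$ and substitute $x=hzh$ into the fixed-point equation $h^2JxJh^2=x$; grouping the outer factors via $h^2Jh=hJ$ and $hJh^2=Jh$ (both consequences of $hJh=J$) turns the left side into $hJzJh$, so the equation reduces to $JzJ=z$, i.e. $Jz=-zJ$. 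Together with Hermiticity of $z=h^{-1}xh^{-1}$ this says precisely $z\in\mathfrak{sp}_2(\mathcal{H})_h$, and the computation is reversible, giving equality of the two sets.

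For the adjoint I would compute $\langle\Pi_g(x),y\rangle$ with respect to the trace inner product, using Hermiticity and cyclicity of the trace to rewrite the nontrivial term as $\mathrm{Tr}\big((JgygJ)\,x\big)$. Applying $Jg=g^{-1}J$ on the left and $gJ=Jg^{-1}$ on the right yields $JgygJ=g^{-1}JyJg^{-1}$, which is exactly the kernel appearing in $\Pi_{g^{-1}}(y)$, so $\langle\Pi_g(x),y\rangle=\langle x,\Pi_{g^{-1}}(y)\rangle$ and $\Pi_g^{*}=\Pi_{g^{-1}}$. The case $g=1$ then follows at once: the adjoint formula gives $\Pi_1^{*}=\Pi_1$, so $\Pi_1$ is self-adjoint, and by the range computation its image is $1\cdot\mathfrak{sp}_2(\mathcal{H})_h\cdot 1=\mathfrak{sp}_2(\mathcal{H})_h$, whence $\Pi_1$ is the orthogonal projection onto that subspace.

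The main obstacle I anticipate is not any single step but the consistent and correct propagation of the square-root identity $g^{1/2}Jg^{1/2}=J$ through the conjugations in the range argument; the regrouping $h^2JhzhJh^2=hJzJh$ must be done carefully, since an error in moving $J$ past $h$ or $h^2$ would spoil the reduction to $JzJ=z$. Once the symplectic identities for $g$ and $g^{1/2}$ are firmly in place, every one of the four assertions is a short algebraic computation.
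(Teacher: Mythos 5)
Your proof is correct and follows essentially the same route as the paper: both rest on the identities $gJg=J$ and $g^{1/2}Jg^{1/2}=J$, with the same expansion for idempotency and the same trace manipulation (cyclicity plus $Jg=g^{-1}J$, $gJ=Jg^{-1}$) for $\Pi_g^{*}=\Pi_{g^{-1}}$. The only difference is organizational: where the paper proves the range equality by two separate inclusions (showing $\Pi_g$ fixes $g^{1/2}xg^{1/2}$, and factoring $\Pi_g(x)=g^{1/2}\bigl(\cdots\bigr)g^{1/2}$ with the bracket anti-commuting with $J$), you identify the range of the idempotent with its fixed-point set and substitute $x=hzh$, which packages the same algebra slightly more compactly; your explicit handling of the $g=1$ case (self-adjointness from the adjoint formula) is a small addition the paper leaves implicit.
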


\begin{proof}First we prove that $\Pi_g$ is an idempotent map. Indeed, using the fact that $gJg=J$,  $$\Pi^2_g(x)=\Pi_g(\frac{1}{2}\big(x+gJxJg\big))=\frac{1}{4}\big(x+gJxJg+gJ(x+gJxJg)Jg\big)=$$  
$$=\frac{1}{4}\big(x+2gJxJg+(gJg)JxJ(gJg)\big)=\Pi_g(x).$$ Now we will prove that $\mbox{Ran}(\Pi_g)=g^{1/2}\mathfrak{sp}_2(\mathcal{H})_hg^{1/2}$. Indeed, let $g^{1/2}xg^{1/2}$ with $x \in \mathfrak{sp}_2(\mathcal{H})_h$, then using that $g^{1/2}Jg^{1/2}=J$ (that is $g^{1/2} \in {\rm Sp}_2^+({\mathcal H})$) and the relation of $x$ with $J$ we have 
$$\Pi_g(g^{1/2}xg^{1/2})=\frac{1}{2}\big(g^{1/2}xg^{1/2}+g^{1/2}g^{1/2}Jg^{1/2}xg^{1/2}Jg\big)=g^{1/2}xg^{1/2}.$$ Finally, note that the range is contained in  $g^{1/2}\mathfrak{sp}_2(\mathcal{H})_hg^{1/2}$; $$\frac{1}{2}(x+gJxJg)=g^{1/2}\frac{1}{2}\bigg(g^{-1/2}xg^{-1/2}+g^{1/2}JxJg^{1/2}\bigg)g^{1/2}.$$  
To conclude we must show that the expression in the bracket anti-commutes with $J$, here we will use that $J^2=-1$ and the relation $g^{1/2}J=Jg^{-1/2}:$ 
$$\big(g^{-1/2}xg^{-1/2}+g^{1/2}JxJg^{1/2}\big)J=-g^{-1/2}JJxJg^{1/2}-Jg^{-1/2}xg^{-1/2}=$$ $$=-J\big(g^{1/2}JxJg^{1/2}+g^{-1/2}xg^{-1/2}\big).$$

Now we will show that ${\Pi_g^*}=\Pi_{g^{-1}}$; first note that if $x,y \in H_{\mathbb{R}}$ by the invariant and cyclic properties of the trace we have 
$$Tr(ygJxJg)=Tr(-JygJxJgJ)=Tr(JygJxg^{-1})=Tr(g^{-1}JygJx)$$ $$=Tr(g^{-1}JyJg^{-1}x).$$ Then the inner product is
 $$<\Pi_g(x),y>=Tr\bigg(y\big(\frac{1}{2}(x+gJxJg)\big)\bigg)=\frac{1}{2}Tr\big(yx+ygJxJg\big)=$$ $$=\frac{1}{2}\big(Tr(yx)+Tr(g^{-1}JyJg^{-1}x)\big).$$ On the other hand, we have $$ <x,\Pi_{g^{-1}}(y)>=Tr\bigg(\frac{1}{2}\big(y+g^{-1}JyJg^{-1}\big)x\bigg)=\frac{1}{2}\big(Tr(yx)+Tr(g^{-1}JyJg^{-1}x)\big).$$
 \end{proof}

\subsubsection{Linear connection and geodesics}

It is natural to consider a Hilbert-Riemann metric in ${\rm Sp}_2^+({\mathcal H})$, which consists of endowing each tangent space with the trace inner product. Therefore the Levi-Civita connection of this metric is given by differentiating in the ambient space $H_\mathbb{R}$ and projecting onto $T{\rm Sp}_2^+({\mathcal H})$. For this, we define the positive ambient metric as; $$\mathfrak{p}_{amb}(g,x):=\Vert x\Vert_2  $$ were $x \in T_g{\rm Sp}_2^+({\mathcal H})$. 
Using the formula of the projector over its range and Lemma \ref{adj}, we can calculate the orthogonal projection onto $T_g{\rm Sp}_2^+({\mathcal H})$; that is $$E_ {T_g{\rm Sp}_2^+({\mathcal H})}=\Pi_g(\Pi_g+\Pi_g^*-1)^{-1}=(\Pi_g+\Pi_g^*-1)^{-1}\Pi_g^*=(\Pi_g+\Pi_{g^{-1}}-1)^{-1}\Pi_{g^{-1}}.$$ Then, if $\gamma$ is a smooth curve in ${\rm Sp}_2^+({\mathcal H})$ and $\mathcal{X}(t)$ is a smooth tangent field along $\gamma$ the covariant derivative is $$\frac{D}{dt}\mathcal{X}(t) =E_{\gamma(t)}(\dot{\mathcal{X}}(t)).$$

\begin{prop} A curve $\alpha$ is a geodesic of the Levi-Civita connection if and only if it satisfies the differential equation $$\alpha\ddot{\alpha}\alpha+J\ddot{\alpha}J=0.$$ 
\end{prop}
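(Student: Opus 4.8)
The plan is to start from the intrinsic characterization of a geodesic for the Levi-Civita connection, namely that the covariant derivative of the velocity vanishes,
\[
\frac{D}{dt}\dot{\alpha}=E_{\alpha(t)}(\ddot{\alpha})=0,
\]
where $\ddot{\alpha}$ is the ordinary second derivative taken in the ambient Hilbert space $H_\mathbb{R}$, and then to unwind the projection formula for $E_g$ until it becomes the stated operator identity. Since every element of ${\rm Sp}_2^+({\mathcal H})$ lies in $1+\mathcal{B}_2(\mathcal{H})_h$, the scalar component of $\alpha(t)$ is the constant $1$, so both $\dot{\alpha}$ and $\ddot{\alpha}$ take values in $\mathcal{B}_2(\mathcal{H})_h$ and all the formulas of Lemma \ref{adj} apply directly to $\ddot{\alpha}$.

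The key observation is that the geodesic condition only sees the \emph{kernel} of $E_{\alpha}$, and that this kernel is easy to pin down. Using the factorization
\[
E_{T_g{\rm Sp}_2^+({\mathcal H})}=(\Pi_g+\Pi_{g^{-1}}-1)^{-1}\Pi_{g^{-1}}
\]
recorded just before the statement, together with the invertibility of the left factor $(\Pi_g+\Pi_{g^{-1}}-1)^{-1}$, I would conclude that
\[
E_{\alpha}(\ddot{\alpha})=0 \quad\Longleftrightarrow\quad \Pi_{\alpha^{-1}}(\ddot{\alpha})=0 ,
\]
which collapses the whole problem onto a single explicit idempotent.

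It then remains to substitute the formula of Lemma \ref{adj}, $\Pi_{\alpha^{-1}}(x)=\tfrac12\bigl(x+\alpha^{-1}JxJ\alpha^{-1}\bigr)$, so that $\Pi_{\alpha^{-1}}(\ddot{\alpha})=0$ reads $\ddot{\alpha}+\alpha^{-1}J\ddot{\alpha}J\alpha^{-1}=0$. Conjugating this on both sides by the invertible operator $\alpha$ and cancelling via $\alpha\alpha^{-1}=\alpha^{-1}\alpha=1$ converts it into $\alpha\ddot{\alpha}\alpha+J\ddot{\alpha}J=0$, and since each manipulation is reversible this yields the equivalence in both directions.

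The calculations are entirely routine, so the only delicate point — the main obstacle, such as it is — is the invertibility of the prefactor $\Pi_g+\Pi_{g^{-1}}-1$, which is precisely what legitimizes the equivalence $E_{\alpha}(\ddot{\alpha})=0\iff\Pi_{\alpha^{-1}}(\ddot{\alpha})=0$. This invertibility is already built into the projector formula quoted in the text, being the standard expression for the orthogonal projection onto the range of the idempotent $\Pi_g$ of Lemma \ref{adj}; once it is invoked, everything else is bookkeeping.
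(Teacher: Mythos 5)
Your proposal is correct and follows essentially the same route as the paper: reduce $\frac{D}{dt}\dot{\alpha}=E_{\alpha}(\ddot{\alpha})=0$ to $\Pi_{\alpha^{-1}}(\ddot{\alpha})=0$ via the invertibility of the prefactor in $E_{g}=(\Pi_g+\Pi_{g^{-1}}-1)^{-1}\Pi_{g^{-1}}$, expand using Lemma \ref{adj}, and conjugate by $\alpha$. You even make explicit the one point the paper leaves implicit (that the invertible factor is what justifies the equivalence), so nothing is missing.
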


\begin{proof} Using the last expression of the orthogonal projection $E$, we have $$\frac{D}{dt}\dot{\alpha}(t)=0 \Leftrightarrow \Pi_{{\alpha}^{-1}(t)}(\ddot{\alpha}(t))=0 \Leftrightarrow \ddot{\alpha}+\alpha^{-1}J\ddot{\alpha}J\alpha^{-1}=0.$$
\end{proof}

The study of this equation will appear elsewhere.

\subsubsection{Completeness of ${\rm Sp}_2^+({\mathcal H})$ with the geodesic distance}
Here we study the completeness of the metric space $({\rm Sp}_2^+({\mathcal H}),d_{{\mathfrak{p}_{amb}}})$. It is easy to verify that if we have any curve $\gamma\subset {\rm Sp}_2^+({\mathcal H})$ that joins $a$ to $b$, then $$\|a-b\|_2\leq \int_0^1 \|\dot{\gamma}(t)\|_2 dt=L_{{\mathfrak{p}_{amb}}}(\gamma).$$
From this inequality we have that 
\begin{equation}
\|a-b\|_2\leq d_{{\mathfrak{p}_{amb}}}(a,b),  \ \ \mbox{ for all } a,b \in {\rm Sp}_2^+({\mathcal H}). \label{desdistgeo}
\end{equation}
The key to prove the completeness will be the Proposition \ref{sp2cerrado} and the existence of smooth sections $\sigma_a$.  

\begin{prop} The metric space $({\rm Sp}_2^+({\mathcal H}),d_{{\mathfrak{p}_{amb}}})$ is complete.
\end{prop}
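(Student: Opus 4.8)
The plan is to exploit that, by (\ref{desdistgeo}), the geodesic distance dominates the $2$-norm distance, together with the completeness of the ambient group established in Proposition \ref{sp2cerrado}. So let $(a_n)\subset {\rm Sp}_2^+({\mathcal H})$ be a Cauchy sequence for $d_{\mathfrak{p}_{amb}}$. Inequality (\ref{desdistgeo}) gives $\|a_n-a_m\|_2\leq d_{\mathfrak{p}_{amb}}(a_n,a_m)$, so $(a_n)$ is Cauchy for $\|\cdot\|_2$; by Proposition \ref{sp2cerrado} it converges in $\|\cdot\|_2$ to some $a_0\in {\rm Sp}_2(\mathcal{H})$. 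Since $\|\cdot\|_2$-convergence forces convergence in operator norm and each $a_n$ is positive, the limit satisfies $a_0\geq 0$; being an element of ${\rm Sp}_2(\mathcal{H})$ it is invertible, hence its spectrum lies in $(0,\infty)$, so $a_0>0$ and $a_0\in {\rm Sp}_2^+({\mathcal H})$.

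The crux is then to strengthen the convergence $a_n\to a_0$ from the $2$-norm to the geodesic distance $d_{\mathfrak{p}_{amb}}$, since (\ref{desdistgeo}) only controls the latter from below. Here I would use the section $\sigma_{a_0}$ and the transitive action $\pi_{a_0}$ to manufacture explicit short curves joining $a_0$ to $a_n$ and then bound their ambient length. Set $g_n:=\sigma_{a_0}(a_n)=a_n^{1/2}a_0^{-1/2}\in {\rm Sp}_2(\mathcal{H})$, so that $\pi_{a_0}(g_n)=g_na_0g_n^{*}=a_n$. Because $a_n\to a_0$ in $\|\cdot\|_2$ and $a_0$ is bounded below, the maps $a\mapsto a^{1/2}$ and $a\mapsto a^{-1/2}$ are $2$-norm continuous near $a_0$, whence $g_n\to 1$ in $\|\cdot\|_2$. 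For $n$ large the element $X_n:=\log g_n\in \mathfrak{sp}_2(\mathcal{H})$ is well defined through the Banach-Lie chart at $1$, with $\|X_n\|_2\to 0$, and I would join $a_0$ to $a_n$ by the orbit curve $\delta_n(t):=\pi_{a_0}(e^{tX_n})=e^{tX_n}a_0e^{tX_n^{*}}\subset {\rm Sp}_2^+({\mathcal H})$, which satisfies $\delta_n(0)=a_0$ and $\delta_n(1)=a_n$.

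The length estimate comes from differentiating $\dot\delta_n=X_ne^{tX_n}a_0e^{tX_n^{*}}+e^{tX_n}a_0e^{tX_n^{*}}X_n^{*}$ and applying $\|AB\|_2\leq \|A\|\,\|B\|_2$ together with $\|e^{tX_n}\|\leq e^{\|X_n\|}\leq e$ for $t\in[0,1]$ and $n$ large; this yields $\|\dot\delta_n(t)\|_2\leq C\,\|X_n\|_2$ with $C$ depending only on $\|a_0\|$. Consequently $d_{\mathfrak{p}_{amb}}(a_n,a_0)\leq L_{\mathfrak{p}_{amb}}(\delta_n)=\int_0^1\|\dot\delta_n(t)\|_2\,dt\leq C\,\|X_n\|_2\to 0$, so $a_n\to a_0$ in the geodesic distance and the space is complete.

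The main obstacle is precisely this last upgrade: a soft closedness argument is not available because $d_{\mathfrak{p}_{amb}}$ is strictly finer than $\|\cdot\|_2$, which is what forces the explicit construction of the connecting curves $\delta_n$ via the section and the group action. Two technical points deserve care. First, one must verify that $a\mapsto a^{1/2}$ and $a\mapsto a^{-1/2}$ are continuous in $\|\cdot\|_2$ near $a_0$; this follows since $a_n\to a_0$ uniformly in operator norm and $a_0$ is bounded below, so the relevant functional calculus takes place on a fixed compact spectral interval away from $0$. Second, the estimate $\|\log g_n\|_2\leq C'\|g_n-1\|_2$ valid near the identity is needed to guarantee $\|X_n\|_2\to 0$, and this is exactly the Lipschitz continuity of the logarithmic chart of ${\rm Sp}_2(\mathcal{H})$ at $1$.
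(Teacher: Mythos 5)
Your proposal is correct and follows essentially the same route as the paper: both use inequality (\ref{desdistgeo}) with Proposition \ref{sp2cerrado} to obtain a $\Vert\cdot\Vert_2$-limit, then use the section $\sigma$ and the logarithm near the identity to build the orbit curve $e^{tz_n}xe^{tz_n^{*}}$ and bound its ambient length by a constant times $\Vert z_n\Vert_2$. The only difference is that you spell out explicitly that the $\Vert\cdot\Vert_2$-limit is positive and invertible, and hence lies in ${\rm Sp}_2^+({\mathcal H})$, a detail the paper leaves implicit.
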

\begin{proof} Let $(x_n)$ be a Cauchy sequence for the metric $d_{{\mathfrak{p}_{amb}}}$, from equation (\ref{desdistgeo}) we have that $(x_n)$ is a Cauchy sequence in $\|.\|_2$ and then from Proposition \ref{sp2cerrado} we can take $x \in {\rm Sp}_2^+({\mathcal H})$ such that $\|x_n-x\|_2 \rightarrow 0$. Using the continuity of the global section $\sigma_x$, we have that $\|\sigma_x(x_n)-1\|_2=\|\sigma_x(x_n)-\sigma_x(x)\|_2\rightarrow 0$. For $n$ large we can take $z_n \in \mathfrak{sp}_2(\mathcal{H})$ such that $\sigma_x(x_n)=e^{z_n}$ and then it is clear using the previous fact that $\|z_n\|_2\rightarrow 0$. Let $\gamma_n(t)=e^{tz_n}xe^{t{z_n}^*}$ be a curve in ${\rm Sp}_2^+({\mathcal H})$ that joins $\gamma_n(0)=x$ and $\gamma_n(1)=e^{z_n}xe^{{z_n}^*}=\pi_x(\sigma_x(x_n))=x_n$; then if we compute its length, $$L_{{\mathfrak{p}_{amb}}}(\gamma_n)=\int_0^1 \|\dot{\gamma}_n(t)\|_2 dt=\int_0^1\|z_ne^{tz_n}xe^{tz_n^*}+e^{tz_n}xz_n^*e^{tz_n^*}\|_2 dt $$ $$\leq 2\|z_n\|_2\|x\|e^{\|z_n\|}\rightarrow 0.$$    
Then $$d_{{\mathfrak{p}_{amb}}}(x_n,x)\leq L_{{\mathfrak{p}_{amb}}}(\gamma_n) \rightarrow 0.$$ 
\end{proof}

\section{A Polar Riemannian structure}
The polar decomposition of $g \in {\rm Sp}_2(\mathcal{H})$ induces a diffeomorphism $${\rm Sp}_2(\mathcal{H})\stackrel{\varphi}\longrightarrow U_2(\mathcal{H}_J) \times {\rm Sp}_2^+({\mathcal H}) , \  g \longmapsto (u,\vert g \vert).$$ This fact was noted in Prop.14 (iv) on page 98 of the book \cite{Harpe}. The unitary group  $U_2(\mathcal{H}_J)$ is a Riemannian manifold with the metric given by the trace. We can endow the product manifold $U_2(\mathcal{H}_J) \times {\rm Sp}_2^+({\mathcal H})$ with the usual product metric, that is: if $v=(x,y) \in T_u U_2(\mathcal{H}_J)\times T_{\vert g\vert}{\rm Sp}_2^+({\mathcal H})$ we put 
$$\mathcal{P}\big((u,\vert g\vert),v\big):=\bigg(\Vert u^{-1}x\Vert_2^2 + \mathfrak{p}(\vert g\vert,y)^2\bigg)^{1/2}$$             
\begin{equation}  
      =\bigg(\Vert x\Vert_2^2+ \Vert \vert g\vert^{-1/2}y\vert g\vert^{-1/2}\Vert_2^2\bigg)^{1/2}.  \label{metpolar}
\end{equation}
This is the product metric in the Riemannian manifold $U_2(\mathcal{H}_J) \times {\rm Sp}_2^+({\mathcal H})$. The map $\varphi$ is an immersion, from this we can define a new Riemannian metric in the group in the following way: if $v,w \in (T{\rm Sp}_2(\mathcal{H}))_g$ we put $$\langle v,w\rangle_g:=\langle d\varphi_g(v) , d\varphi_g(w)\rangle_{(u,\vert g \vert)}.$$ It is clear that $\varphi$ is an isometric map with the above metric and if $\alpha$ is any curve in the group ${\rm Sp}_2(\mathcal{H})$ we can measure its length as $L_{\mathcal{P}}(\varphi\circ\alpha)$.        

\begin{teo} \label{minpolar} Let $g\in {\rm Sp}_2(\mathcal{H})$ with polar decomposition $u\vert g\vert$ and suppose that $u=e^x$ with $x \in \mathfrak{sp}_2(\mathcal{H})_{ah}$ and $\Vert x\Vert\leq \pi$, then the curve $\alpha(t)=e^{tx}\vert g\vert^t\subset {\rm Sp}_2(\mathcal{H})$ has minimal length among all curves joining $1$ to $g$, if we endow ${\rm Sp}_2(\mathcal{H})$ with the polar Riemannian metric (\ref{metpolar}).
\end{teo}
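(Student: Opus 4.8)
The plan is to use that $\varphi$ is by construction an isometry onto the product $U_2(\mathcal{H}_J)\times{\rm Sp}_2^+({\mathcal H})$, so that measuring the polar length of any curve $\beta$ in ${\rm Sp}_2(\mathcal{H})$ amounts to measuring $L_{\mathcal{P}}(\varphi\circ\beta)$ for the product metric, and this product length splits into contributions from the two factors. First I would compute the length of the candidate curve. Since $e^{tx}\in U_2(\mathcal{H}_J)$ and $\vert g\vert^t\in{\rm Sp}_2^+({\mathcal H})$, the polar decomposition of $\alpha(t)=e^{tx}\vert g\vert^t$ is literally $(e^{tx},\vert g\vert^t)$, so $\varphi\circ\alpha=(e^{tx},\vert g\vert^t)$. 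The unitary part contributes $\Vert e^{-tx}\tfrac{d}{dt}e^{tx}\Vert_2=\Vert x\Vert_2$ and the positive part contributes $\mathfrak{p}(\vert g\vert^t,\tfrac{d}{dt}\vert g\vert^t)=\Vert\ln\vert g\vert\Vert_2$ (both integrands are constant in $t$ because the relevant operators commute with $\vert g\vert$). Hence $L_{\mathcal{P}}(\varphi\circ\alpha)=(\Vert x\Vert_2^2+\Vert\ln\vert g\vert\Vert_2^2)^{1/2}$.

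Next I would take an arbitrary piecewise smooth curve $\beta$ joining $1$ to $g$ and write $\varphi\circ\beta=(\mu,\rho)$, where $\mu$ is a curve in $U_2(\mathcal{H}_J)$ from $1$ to $u$ and $\rho$ is a curve in ${\rm Sp}_2^+({\mathcal H})$ from $1$ to $\vert g\vert$ (smoothness of the polar decomposition guarantees these factor curves are admissible competitors with the stated endpoints). The product-metric integrand is $(\Vert\mu^{-1}\dot\mu\Vert_2^2+\mathfrak{p}(\rho,\dot\rho)^2)^{1/2}$, so applying the Minkowski integral inequality to the $\mathbb{R}^2$-valued map $t\mapsto(\Vert\mu^{-1}\dot\mu\Vert_2,\mathfrak{p}(\rho,\dot\rho))$ — that is, $\Vert\int v\Vert\leq\int\Vert v\Vert$ — gives
$$L_{\mathcal{P}}(\varphi\circ\beta)\geq\bigg(L_{U_2}(\mu)^2+L_{\mathfrak{p}}(\rho)^2\bigg)^{1/2},$$
where $L_{U_2}(\mu)$ and $L_{\mathfrak{p}}(\rho)$ denote the intrinsic lengths of the two factor curves.

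Finally I would invoke minimality in each factor separately. By Theorem \ref{minpos} the curve $\vert g\vert^t$ is minimal in ${\rm Sp}_2^+({\mathcal H})$, so $L_{\mathfrak{p}}(\rho)\geq\Vert\ln\vert g\vert\Vert_2$. For the unitary factor, the hypothesis $\Vert x\Vert\leq\pi$ is exactly the condition under which the one-parameter group $e^{tx}$ is a minimal geodesic of $U_2(\mathcal{H}_J)$ for the bi-invariant trace metric (see \cite{Andruchow2}), yielding $L_{U_2}(\mu)\geq\Vert x\Vert_2$. Substituting both bounds into the displayed inequality gives $L_{\mathcal{P}}(\varphi\circ\beta)\geq(\Vert x\Vert_2^2+\Vert\ln\vert g\vert\Vert_2^2)^{1/2}=L_{\mathcal{P}}(\varphi\circ\alpha)$, which is the claimed minimality.

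The main obstacle is the unitary estimate: the whole argument rests on having a \emph{sharp} minimality statement for $e^{tx}$ in $U_2(\mathcal{H}_J)$ valid precisely up to the diameter threshold $\Vert x\Vert\leq\pi$, which is why this hypothesis is imposed and why the result of \cite{Andruchow2} must be cited rather than reproved here. A secondary point to verify carefully is that the splitting step is lossless for the candidate curve (so that equality actually holds), which is ensured by the integrands of $\varphi\circ\alpha$ being constant, making the Minkowski inequality tight along $\alpha$.
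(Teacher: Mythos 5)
Your proposal is correct and follows essentially the same route as the paper's own proof: compute the length of $\varphi\circ\alpha=(e^{tx},\vert g\vert^t)$, split an arbitrary competitor via its polar decomposition, apply the Minkowski integral inequality to reduce to the two factor lengths, and invoke the minimality results for $U_2(\mathcal{H}_J)$ (from \cite{Andruchow2}) and for ${\rm Sp}_2^+({\mathcal H})$ (Theorem \ref{minpos}). The only differences are cosmetic (e.g.\ writing $\Vert\mu^{-1}\dot\mu\Vert_2$ instead of $\Vert\dot\beta_1\Vert_2$, which agree by unitary invariance of the $2$-norm), so there is nothing to add.
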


\begin{proof} By the polar decomposition, $\varphi\circ \alpha(t)=(e^{tx},\vert g\vert^t)$ and its length is $$L_{\mathcal{P}}(\varphi\circ \alpha)=\int_0^{1}        \mathcal{P}\big((e^{tx},\vert g\vert^t),(xe^{tx},\ln \vert g\vert \vert g\vert^{t})\big)dt=\big({\| x \|}^{2}_2 + {\|\ln \vert g\vert\|}^{2}_2\big)^{1/2}.$$ Let $\beta$ be another curve that joins the same endpoints and suppose that $\beta=\beta_1\beta_2$ is its polar decomposition where  $\beta_1\subset U_2(\mathcal{H}_J)$ and $\beta_2\subset {\rm Sp}_2^+({\mathcal H})$, then $$L_{\mathcal{P}}(\varphi\circ \beta)=\int_0^{1} \mathcal{P}\big((\beta_1,\beta_2), (\dot{\beta_1}, \dot{\beta_2})\big)dt = \int_0^{1} \big( \|\dot{\beta_1}\|_2^{2} + \mathfrak{p}(\beta_2,\dot{\beta_2})^{2}\big)^{1/2} dt. $$ Using the Minkowski inequality (see inequality 201 of \cite{Hardy}) we have, $$ \int_0^{1} \bigl( \|\dot{\beta_1}\|_2^{2} + \mathfrak{p}(\beta_2,\dot{\beta_2})^{2} \bigl)^{1/2} dt \geq \biggl( \biggl\lbrace\int_0^{1} \|\dot{\beta_1}\|_2\biggl\rbrace^{2} + \biggl\lbrace\int_0^{1} \mathfrak{p}(\beta_2,\dot{\beta_2})\biggl\rbrace^{2}\biggl)^{1/2}$$ $$=\biggl(L_2(\beta_1)^{2}+L_{\mathfrak{p}}(\beta_2)^{2}\biggl)^{1/2}.$$ It is know that the geodesic curve $e^{tx}$ has minimal length among all smooth curves in 
$U_2(\mathcal{H}_J)$ joining the same endpoints (see \cite{Andruchow2}); using this fact and from Theorem \ref{minpos} we have, $$L_2(\beta_1)\geq L_2(e^{tx})=\|x\|_2 \ \ \mbox{and} \ \ L_{\mathfrak{p}}(\beta_2)\geq L_{\mathfrak{p}}(e^{t\ln(\vert g \vert})=\|\ln \vert g \vert\|_2$$ then it is clear that $L_{\mathcal{P}}(\varphi\circ \beta)\geq L_{\mathcal{P}}(\varphi\circ \alpha)$.
\end{proof}

\begin{rem} Let $p,q \in {\rm Sp}_2(\mathcal{H})$, suppose that $u_p\vert p\vert$ and $u_q\vert q\vert$ are their polar decompositions, from the surjectivity of the exponential map we can choose $z \in \mathfrak{sp}_2(\mathcal{H})_{ah}$ such that $u_q=u_pe^{z}$ with $\|z\|\leq \pi$, then the curve
$$\alpha_{p,q}(t)=u_pe^{tz}\vert p\vert^{1/2}({\vert p\vert}^{-1/2}\vert q\vert {\vert p\vert}^{-1/2})^t\vert p\vert^{1/2}\subset  {\rm Sp}_2(\mathcal{H})$$ has minimal length among all curves joining $p$ to $q$.  
\end{rem}
The above fact shows that the curve $\alpha_{p,q}$ is a geodesic of the Levi-Civita connection of the polar metric. Its length is $$\bigg({\|z\|}^{2}_2 + {\|\ln \vert p\vert^{-1/2} \vert q\vert\vert p\vert^{-1/2}\|}^{2}_2\bigg)^{1/2}. $$ From this, the geodesic distance is
$$d_{\mathcal{P}}(p,q)=\big(d_2(u_p,u_q)^2+d_{\mathfrak{p}}(\vert p\vert,\vert q \vert)^2\big)^{1/2}.$$

\paragraph{Special case: normal speed.} 
If the initial condition $v \in \mathfrak{sp}_2(\mathcal{H})$ is normal, then the geodesics starting at the identity map coincide with the geodesics from polar metric. Indeed, if $v=x+y$ is the decomposition in $\mathfrak{sp}_2(\mathcal{H})_h\oplus \mathfrak{sp}_2(\mathcal{H})_{ah}$ and $v$ is normal a straightforward computation shows that $x$ commutes with $y$, thus we have $$e^{tv^{*}}e^{t(v-v^{*})}=e^{tv}=e^{tx}e^{ty}.$$This equation shows that the geodesic are one-parameter groups when the initial speed is normal.    

\begin{prop} The metric space $( {{\rm Sp}_2(\mathcal{H})},d_\mathcal{P})$ is complete.
\end{prop}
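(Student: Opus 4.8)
The plan is to exploit the fact, recorded just above this statement, that the polar decomposition diffeomorphism $\varphi$ is an isometry and that the induced geodesic distance splits as
$$d_{\mathcal{P}}(p,q)=\big(d_2(u_p,u_q)^2+d_{\mathfrak{p}}(\vert p\vert,\vert q \vert)^2\big)^{1/2}.$$
In other words, $\varphi$ identifies $({\rm Sp}_2(\mathcal{H}),d_{\mathcal{P}})$ isometrically with the product $U_2(\mathcal{H}_J)\times {\rm Sp}_2^+({\mathcal H})$ carrying the $\ell^2$-combination of $d_2$ and $d_{\mathfrak{p}}$. Since completeness is an isometric invariant, and a finite product of complete metric spaces with the $\ell^2$ product metric is complete, the whole problem reduces to the completeness of each of the two factors.

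I would then carry out the reduction explicitly. Take a $d_{\mathcal{P}}$-Cauchy sequence $(g_n)$ with polar decompositions $g_n=u_n\vert g_n\vert$. The distance formula gives at once $d_2(u_n,u_m)\le d_{\mathcal{P}}(g_n,g_m)$ and $d_{\mathfrak{p}}(\vert g_n\vert,\vert g_m\vert)\le d_{\mathcal{P}}(g_n,g_m)$, so $(u_n)$ is $d_2$-Cauchy in $U_2(\mathcal{H}_J)$ and $(\vert g_n\vert)$ is $d_{\mathfrak{p}}$-Cauchy in ${\rm Sp}_2^+({\mathcal H})$. Using the completeness of the two factors I obtain limits $u\in U_2(\mathcal{H}_J)$ and $a\in {\rm Sp}_2^+({\mathcal H})$. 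Setting $g:=ua\in {\rm Sp}_2(\mathcal{H})$ and observing that $(ua)^*(ua)=a\,u^{-1}u\,a=a^2$, one has $\vert g\vert=a$ and unitary part $u$, so $\varphi(g)=(u,a)$; the distance formula then yields $d_{\mathcal{P}}(g_n,g)^2=d_2(u_n,u)^2+d_{\mathfrak{p}}(\vert g_n\vert,a)^2\to 0$, which is the desired convergence.

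The completeness of the positive factor $({\rm Sp}_2^+({\mathcal H}),d_{\mathfrak{p}})$ is already recorded above, so the remaining point—and the main obstacle—is the completeness of the unitary factor $(U_2(\mathcal{H}_J),d_2)$. I expect to settle it by the same scheme used for the positive part: the geodesic length dominates the $2$-norm distance, so a $d_2$-Cauchy sequence $(u_n)$ is $\Vert\cdot\Vert_2$-Cauchy, and since $U_2(\mathcal{H}_J)$ is closed in $1+\mathcal{B}_2(\mathcal{H})$ (it is a closed subgroup of the $\Vert\cdot\Vert_2$-complete group ${\rm Sp}_2(\mathcal{H})$ of Proposition \ref{sp2cerrado}), it converges in $\Vert\cdot\Vert_2$ to some $u\in U_2(\mathcal{H}_J)$. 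For $n$ large one writes $u^{-1}u_n=e^{z_n}$ with $z_n\in\mathfrak{sp}_2(\mathcal{H})_{ah}$ (anti-Hermitian by Proposition \ref{subgr}) and $\Vert z_n\Vert_2\to 0$ via the logarithm series, and then the short geodesic $ue^{tz_n}$ shows $d_2(u_n,u)\le\Vert z_n\Vert_2\to 0$. Alternatively, one may simply invoke the minimality and completeness results for the restricted unitary group from \cite{Andruchow2}. The only delicate checks are that $z_n$ is genuinely anti-Hermitian and Hilbert--Schmidt and that $\Vert z_n\Vert_2\to 0$, both of which follow from $\Vert u^{-1}u_n-1\Vert_2\to 0$.
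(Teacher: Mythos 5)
Your proposal is correct and follows essentially the same route as the paper: split a $d_{\mathcal{P}}$-Cauchy sequence via the polar decomposition and the product distance formula, use completeness of $({\rm Sp}_2^+(\mathcal{H}),d_{\mathfrak{p}})$ and of $(U_2(\mathcal{H}_J),d_2)$ to get limits $u$ and $a$, and recombine as $x:=ua$. The only difference is cosmetic: where the paper simply cites \cite{Andruchow2} for the completeness of the unitary factor, you additionally sketch a correct self-contained proof of that fact (norm-domination, closedness, and the short geodesic $ue^{tz_n}$), and you verify explicitly that $\varphi(ua)=(u,a)$, a detail the paper leaves implicit.
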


\begin{proof} Let $(x_n)\subset {\rm Sp}_2(\mathcal{H})$ be a Cauchy sequence with $d_{\mathcal{P}}$, if $x_n=u_{x_n}\vert x_n\vert$ is its polar decomposition, we have that $$d_2(u_{x_n},u_{x_m})\leq d_{\mathcal{P}}(x_n,x_m)=\big(d_2(u_{x_n},u_{x_m})^2+d_{\mathfrak{p}}(\vert x_n\vert,\vert x_m \vert)^2\big)^{1/2}$$ then the unitary part is a Cauchy sequence in $(U_2(\mathcal{H}_J),d_2)$  and by \cite{Andruchow2} it is  $d_2$ convergent to an element $u \in U_2(\mathcal{H}_J)$. Analogously the positive part is a Cauchy sequence in $({\rm Sp}_2^+({\mathcal H}),d_{\mathfrak{p}})$ then it is convergent to an element $g\in {\rm Sp}_2^+({\mathcal H})$. If we put $x:=ug \in {\rm Sp}_2(\mathcal{H})$  then, $$d_{\mathcal{P}}(x_n,x)=\big(d_2(u_{x_n},u)^2+d_{\mathfrak{p}}(\vert x_n\vert,g)^2\big)^{1/2} \rightarrow 0.$$         
\end{proof}

In the next steps we will compare the geodesic distance measured with the polar metric versus the left invariant metric. To do it we need the following proposition first. 

\begin{prop} \label{despol} Given $p,q \in {\rm Sp}_2(\mathcal{H})$, if we denote $v:=\vert p\vert^{-1/2} \vert q\vert\vert p\vert^{-1/2}$ we can estimate the geodesic distance $d_{\mathcal{I}}$ by the geodesic distance $d_{\mathcal{P}}$ as, $$d_{\mathcal{I}}(p,q)\leq c(p,q)d_{\mathcal{P}}(p,q)$$ where $$c(p,q)^2=2\max\big\lbrace \ e^{4\|\ln(v)\|}  \big(\| p \|  \| p^{-1} \|\big)^2 ,   \| p \|  \| p^{-1}\| \big\rbrace.$$  
\end{prop}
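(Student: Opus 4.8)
The plan is to bound $d_{\mathcal{I}}(p,q)$ from above by feeding the polar minimal curve $\alpha_{p,q}$ of the preceding remark into the left invariant length functional, using that $d_{\mathcal{I}}(p,q)\le L_{\mathcal{I}}(\alpha_{p,q})$ for this particular curve joining $p$ to $q$. Writing $\alpha_{p,q}=w\rho$ for its unitary factor $w(t)=u_pe^{tz}$ and its positive factor $\rho(t)=\vert p\vert^{1/2}v^{t}\vert p\vert^{1/2}$ (so that $\dot w=wz$), one computes
$$\alpha^{-1}\dot\alpha=\rho^{-1}w^{-1}\big(wz\rho+w\dot\rho\big)=\rho^{-1}z\rho+\rho^{-1}\dot\rho,$$
whence $\mathcal{I}(\alpha,\dot\alpha)=\Vert\alpha^{-1}\dot\alpha\Vert_2\le\Vert\rho^{-1}z\rho\Vert_2+\Vert\rho^{-1}\dot\rho\Vert_2$. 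The whole argument then reduces to estimating these two summands pointwise in $t$ and comparing the integral with the polar distance $d_{\mathcal{P}}(p,q)=\big(\Vert z\Vert_2^2+\Vert\ln v\Vert_2^2\big)^{1/2}$ recorded earlier.

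For the first summand I would use submultiplicativity $\Vert\rho^{-1}z\rho\Vert_2\le\Vert\rho^{-1}\Vert\,\Vert z\Vert_2\,\Vert\rho\Vert$ together with the operator-norm bounds $\Vert\rho\Vert\le\Vert p\Vert\,e^{t\Vert\ln v\Vert}$ and $\Vert\rho^{-1}\Vert\le\Vert p^{-1}\Vert\,e^{t\Vert\ln v\Vert}$, which follow from $\Vert\,\vert p\vert^{\pm1/2}\Vert=\Vert p^{\pm1}\Vert^{1/2}$ and $\Vert v^{\pm t}\Vert\le e^{t\Vert\ln v\Vert}$; since $t\in[0,1]$ this gives $\Vert\rho^{-1}z\rho\Vert_2\le\Vert p\Vert\,\Vert p^{-1}\Vert\,e^{2\Vert\ln v\Vert}\Vert z\Vert_2$. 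The key simplification concerns the second summand: because $\rho=\vert p\vert^{1/2}v^{t}\vert p\vert^{1/2}$, the factors $v^{-t}$ and $v^{t}$ cancel and $\rho^{-1}\dot\rho=\vert p\vert^{-1/2}\ln(v)\vert p\vert^{1/2}$ is independent of $t$, so $\Vert\rho^{-1}\dot\rho\Vert_2\le\big(\Vert p\Vert\,\Vert p^{-1}\Vert\big)^{1/2}\Vert\ln v\Vert_2$.

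Integrating over $[0,1]$ I obtain $L_{\mathcal{I}}(\alpha_{p,q})\le a_1\Vert z\Vert_2+a_2\Vert\ln v\Vert_2$ with $a_1=\Vert p\Vert\,\Vert p^{-1}\Vert\,e^{2\Vert\ln v\Vert}$ and $a_2=\big(\Vert p\Vert\,\Vert p^{-1}\Vert\big)^{1/2}$; applying the Cauchy--Schwarz inequality to this two-term sum gives $L_{\mathcal{I}}(\alpha_{p,q})\le(a_1^2+a_2^2)^{1/2}\big(\Vert z\Vert_2^2+\Vert\ln v\Vert_2^2\big)^{1/2}=(a_1^2+a_2^2)^{1/2}\,d_{\mathcal{P}}(p,q)$. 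The elementary estimate $A+B\le 2\max\{A,B\}$ then yields $a_1^2+a_2^2\le c(p,q)^2$, which is exactly the stated constant, and $d_{\mathcal{I}}(p,q)\le L_{\mathcal{I}}(\alpha_{p,q})$ closes the proof. The only delicate points are tracking the exponents correctly (the factor $e^{2\Vert\ln v\Vert}$ must be squared into $e^{4\Vert\ln v\Vert}$ inside $c(p,q)^2$) and noticing the $t$-independence of $\rho^{-1}\dot\rho$; everything else is routine norm bookkeeping.
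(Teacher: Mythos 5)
Your proposal is correct and follows essentially the same route as the paper: both feed the polar minimal curve $\alpha_{p,q}$ into $L_{\mathcal{I}}$, split $\alpha^{-1}_{p,q}\dot{\alpha}_{p,q}$ into the conjugated term $\rho^{-1}z\rho$ and the $t$-independent term $\vert p\vert^{-1/2}\ln(v)\vert p\vert^{1/2}$, and use the same operator-norm estimates to reach the same constant $c(p,q)$. The only cosmetic difference is that you combine the two summands via the triangle inequality, Cauchy--Schwarz and $A+B\le 2\max\{A,B\}$, whereas the paper uses the parallelogram rule followed by the max bound pointwise in $t$.
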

\begin{proof} The proof consist of estimate $L_{\mathcal{I}}(\alpha_{p,q})$; if we derive ${\alpha}_{p,q}$ we have, $$\dot{\alpha}_{p,q}=u_pze^{tz}\vert p\vert^{1/2}e^{t\ln(v)}\vert p\vert^{1/2}+u_pe^{tz}\vert p\vert^{1/2}\ln(v)e^{t\ln(v)}\vert p\vert^{1/2}$$ and the inverse of the curve ${\alpha}_{p,q}$ is $$\alpha^{-1}_{p,q}=\vert p\vert^{-1/2}e^{-t\ln(v)}\vert p\vert^{-1/2}e^{-tz}u_p^{-1}.$$ After some simplifications we can write $$\alpha^{-1}_{p,q}\dot{\alpha}_{p,q}=\vert p\vert^{-1/2}e^{-t\ln(v)}\vert p\vert^{-1/2}z\vert p\vert^{1/2}e^{t\ln(v)}\vert p\vert^{1/2}+\vert p\vert^{-1/2}\ln(v)\vert p\vert^{1/2}.$$ Let  $x:=\vert p\vert^{1/2}e^{t\ln(v)}\vert p\vert^{1/2},$ taking the norm and using the parallelogram rule we have, 
\begin{align}
\|\alpha^{-1}_{p,q}\dot{\alpha}_{p,q}\|^{2}_2 &= \|x^{-1}zx+\vert p\vert^{-1/2}\ln(v)\vert p\vert^{1/2}\|^{2}_2  \nonumber\\
& \leq 2\big (\|x^{-1}zx\|^{2}_2+\|\vert p\vert^{-1/2}\ln(v)\vert p\vert^{1/2}\|^{2}_2\big)  \nonumber\\
& \leq 2\big (\|x^{-1}\|^2 \ \|x\|^2 \ \|z\|^{2}_2+ \| \vert p \vert^{-1/2} \|^2 \ \| \ln(v) \|^{2}_2 \ \| \vert p\vert^{1/2}\|^2\big ) \label{des} .
\end{align}
We can estimate  $\|x\|^2$ and $\|x^{-1}\|^2$ by $$\|x\|^2\leq \| \vert p\vert^{1/2}\|^4 \ e^{2\|\ln(v)\|}=\|p\|^2e^{2\|\ln(v)\|} $$
 and $$\|x^{-1}\|^2\leq \| \vert p\vert^{-1/2}\|^4 \ e^{2\|\ln(v)\|}=\|p^{-1}\|^2e^{2\|\ln(v)\|}.$$
If we define  $$c(p,q)^2=2\max \big\lbrace \ e^{4\|\ln(v)\|}  \big(\| p \|  \| p^{-1} \|\big)^2 ,   \| p \|  \| p^{-1}\| \big\rbrace. $$
from (\ref{des}) and taking square roots we have, $$\|\alpha^{-1}_{p,q}\dot{\alpha}_{p,q}\|_2\leq c(p,q)\big(\|z\|^{2}_2+\| \ln(v) \|^{2}_2\big)^{1/2}=
c(p,q)d_{\mathcal{P}}(p,q),$$ then $$d_{\mathcal{I}}(p,q)\leq L_{\mathcal{I}}(\alpha_{p,q})\leq c(p,q)d_{\mathcal{P}}(p,q).$$

\end{proof}

\section{The metric space $({\rm Sp}_2(\mathcal{H}),d_{\mathcal{I}})$}
In this section we will prove the main result of this paper, that is the completeness of $({\rm Sp}_2(\mathcal{H}),d_{\mathcal{I}})$, it will be deduced from the completeness of $(U_2(\mathcal{H}_J),d_2)$ and from Proposition \ref{despol}. The next lemma is essential for the proof.

\begin{lem} If $(x_n)\subset{\rm Sp}_2(\mathcal{H})$ is a Cauchy sequence in $({\rm Sp}_2(\mathcal{H}),d_{\mathcal{I}})$ then it is a Cauchy sequence in  $({\rm Sp}_2(\mathcal{H}),\|. \|_2)$.
\end{lem}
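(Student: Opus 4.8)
The plan is to bound the $2$-norm distance $\|x_n-x_m\|_2$ by the $d_{\mathcal I}$-length of a near-minimal curve joining $x_n$ to $x_m$; the essential ingredient for this is an a priori control of the operator norm along any curve in terms of its $d_{\mathcal I}$-length. First I would record the elementary fact that for any (piecewise smooth) curve $\alpha$ joining $p$ to $q$ in ${\rm Sp}_2(\mathcal{H})$,
\[
\|p-q\|_2=\Big\|\int_0^1\dot\alpha(t)\,dt\Big\|_2\leq\int_0^1\|\dot\alpha(t)\|_2\,dt,
\]
and then factor $\dot\alpha=\alpha\,(\alpha^{-1}\dot\alpha)$. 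Since $\mathcal{B}_2(\mathcal{H})$ is an ideal, $\|\alpha\,(\alpha^{-1}\dot\alpha)\|_2\leq\|\alpha(t)\|\,\|\alpha^{-1}\dot\alpha\|_2$, where $\|\cdot\|$ denotes the operator norm, so that $\|p-q\|_2\leq\big(\sup_t\|\alpha(t)\|\big)\,L_{\mathcal I}(\alpha)$. Thus everything reduces to estimating $\sup_t\|\alpha(t)\|$ by the $d_{\mathcal I}$-length of $\alpha$.

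The key step, which I expect to be the main obstacle, is a Gronwall-type growth estimate for the operator norm. Writing $\alpha(t+h)=\alpha(t)\big(\alpha(t)^{-1}\alpha(t+h)\big)$ and expanding $\alpha(t)^{-1}\alpha(t+h)=1+h\,\alpha(t)^{-1}\dot\alpha(t)+o(h)$ (valid in $2$-norm, hence in operator norm), submultiplicativity of the operator norm yields the Dini estimate $D^+\|\alpha(t)\|\leq\|\alpha(t)\|\,\|\alpha^{-1}\dot\alpha\|\leq\|\alpha(t)\|\,\|\alpha^{-1}\dot\alpha\|_2$, where the last inequality uses $\|\cdot\|\leq\|\cdot\|_2$. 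The subtlety here is that $t\mapsto\|\alpha(t)\|$ is in general only locally Lipschitz and not differentiable, so I would justify the passage to the integral form via upper Dini derivatives together with the absolute continuity of $\|\alpha(t)\|$, concluding by Gronwall that
\[
\|\alpha(t)\|\leq\|\alpha(0)\|\,\exp\!\Big(\int_0^t\|\alpha^{-1}(s)\dot\alpha(s)\|_2\,ds\Big)\leq\|\alpha(0)\|\,e^{L_{\mathcal I}(\alpha)}.
\]

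With this estimate the proof closes quickly. A Cauchy sequence is bounded, so $D:=\sup_n d_{\mathcal I}(x_1,x_n)<\infty$; choosing for each $n$ a curve from $x_1$ to $x_n$ of $d_{\mathcal I}$-length arbitrarily close to $d_{\mathcal I}(x_1,x_n)\leq D$ and applying the growth estimate gives the uniform bound $\sup_n\|x_n\|\leq\|x_1\|\,e^{D}=:M<\infty$. Finally, fix $\epsilon<1$ and take $N$ so large that $d_{\mathcal I}(x_n,x_m)<\epsilon$ for $n,m\geq N$; I would pick a curve $\alpha$ from $x_n$ to $x_m$ with $L_{\mathcal I}(\alpha)<\epsilon$. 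Applying the growth estimate on this curve (with $\alpha(0)=x_n$) gives $\sup_t\|\alpha(t)\|\leq M e^{\epsilon}$, whence by the reduction of the first paragraph $\|x_n-x_m\|_2\leq M e^{\epsilon}\,L_{\mathcal I}(\alpha)< M e\,\epsilon$. Since $M$ and $e$ are fixed, letting $\epsilon\to0$ shows that $(x_n)$ is a Cauchy sequence for $\|\cdot\|_2$, which is exactly the claim.
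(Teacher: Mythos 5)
Your proof is correct, but it takes a genuinely different route from the paper. Both arguments share the same skeleton: the reduction $\|p-q\|_2\leq\big(\sup_t\|\alpha(t)\|\big)L_{\mathcal I}(\alpha)$ via the ideal inequality $\|ab\|_2\leq\|a\|\,\|b\|_2$, then a uniform operator-norm bound on the sequence, then the Cauchy estimate. The difference is in how the operator norm along a curve is controlled. The paper exploits the geodesic structure developed earlier: by left-invariance it translates $x_n^{-1}x_{n+p}$ to a geodesic neighborhood of $1$, takes the minimal geodesic $\alpha_p(t)=e^{tv_p^{*}}e^{t(v_p-v_p^{*})}=Exp_1(tv_p)$ with $L_{\mathcal I}(\alpha_p)=\|v_p\|_2<\varepsilon$, and reads off the explicit bound $\|\alpha_p(t)\|\leq e^{3\|v_p\|_2}\leq e^{3\varepsilon}$ from the exponential formula. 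You instead prove a Gronwall-type estimate $\|\alpha(t)\|\leq\|\alpha(0)\|\,e^{L_{\mathcal I}(\alpha)}$ valid along \emph{any} piecewise smooth curve (with the Dini-derivative care you correctly flag, applied piecewise at the finitely many junction points), and then apply it to near-minimizing curves. What your approach buys: it is purely metric and never invokes the Riemannian exponential, the explicit form of the geodesics, or the nontrivial infinite-dimensional fact that geodesics in a normal neighborhood realize the distance $d_{\mathcal I}$ — it only needs $\|\cdot\|\leq\|\cdot\|_2$ and submultiplicativity, so it would transfer verbatim to any subgroup of $GL_2(\mathcal H)$ with the left-invariant $2$-norm metric. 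What the paper's approach buys: it recycles the spray/exponential machinery already established, gets a concrete constant from a closed-form curve, and sidesteps the measure-theoretic justification (absolute continuity, a.e.\ differentiability) that your Gronwall step requires to be fully rigorous.
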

\begin{proof} First we take $W,U$ geodesic neighboords of $0$ and $1$ respectively such that  $$Exp_1 : W \longrightarrow U:=Exp_1(W)\subset {\rm Sp}_2(\mathcal{H})$$ is a diffeomorphism. If $(x_n)$ is  $d_{\mathcal{I}}$-Cauchy, given small $\varepsilon$ there exist $n(\varepsilon)$ such that $d_{\mathcal{I}}(x_n^{-1}x_{n+p},1)=d_{\mathcal{I}}(x_{n+p},x_n)<\varepsilon$ $\forall p$. Then we can suppose that $x_n^{-1}x_{n+p} \in U$  for all $p$. Let $\alpha_p(t)=e^{tv_p^{*}}e^{t(v_p-v_p^{*})}=Exp_1(tv_p)$ with $v_p \in W$ be the minimal curve that joins $1$ to $x_n^{-1}x_{n+p}$, then    
$$d_{\mathcal{I}}(x_n^{-1}x_{n+p},1)=L_{\mathcal{I}}(\alpha_p)=\| v_p\|_2<\varepsilon.$$ We have 
$$\|x_n^{-1}x_{n+p}-1\|_2\leq \int_0^{1} \|\dot{\alpha_p}(t)\|_2 dt  \leq \int_0^{1} \|\alpha_p(t)\| \|\alpha_p^{-1}\dot{\alpha_p}(t)\|_2 dt,$$
$$\|\alpha_p(t)\|=\| e^{tv_p^{*}}e^{t(v_p-v_p^{*})}\|\leq e^{3\| v_p\|_2}\leq e^{3\varepsilon}.$$ From this, $$\|x_n^{-1}x_{n+p}-1\|_2\leq e^{3\varepsilon}\varepsilon, \ \mbox{for all} \ p.$$ This fact shows that the sequence is bounded in the uniform norm; indeed if we take $\varepsilon_0$ such that the sequence belongs in the geodesic neighboord $U$, then there exists $n_0$ (fixed) such that $\|x_{n_0}^{-1}x_{n_0+p}-1\|_2\leq e^{3\varepsilon_0}\varepsilon_0, \ \mbox{for all} \ p$. Then if $m=n_0+p>n_0$, we have $$ \vert \|x_{n_0} \|- \|x_m \|\vert\leq   \|x_{n_0}-x_m \|_2 \leq  \|x_{n_0} \|\|x_{n_0}^{-1}x_{n_0+p}-1\|_2\leq   \|x_{n_0} \|e^{3\varepsilon_0}\varepsilon_0 ;$$ then $$  \|x_{m=n_0+p} \|\leq \vert \|x_m \|- \|x_{n_0} \|\vert +  \|x_{n_0} \|\leq \|x_{n_0} \|(1+e^{3\varepsilon_0}\varepsilon_0) \  \forall p.$$ To complete the proof, if $n$ is large, we have $$\|x_{n+p}-x_n\|_2=\|x_n(x_n^{-1}x_{n+p} -1)\|_2\leq \|x_n\|e^{3\varepsilon}\varepsilon\leq Ke^{3\varepsilon}\varepsilon  \ \ \forall p.$$    
\end{proof}
Now we are in a position to obtain our main result.
\begin{teo} The metric space $({\rm Sp}_2(\mathcal{H}),d_{\mathcal{I}})$ is complete.
\end{teo}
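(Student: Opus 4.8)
The plan is to assemble the three ingredients already prepared: the preceding Lemma, the completeness of $({\rm Sp}_2(\mathcal{H}),\|\cdot\|_2)$ from Proposition \ref{sp2cerrado}, and the comparison estimate of Proposition \ref{despol}. First I would take a $d_{\mathcal{I}}$-Cauchy sequence $(x_n)\subset {\rm Sp}_2(\mathcal{H})$. By the preceding Lemma it is also a Cauchy sequence for $\|\cdot\|_2$, so by Proposition \ref{sp2cerrado} it converges in $\|\cdot\|_2$ to some limit $x\in {\rm Sp}_2(\mathcal{H})$. The whole task then reduces to upgrading this $\|\cdot\|_2$-convergence to $d_{\mathcal{I}}$-convergence.

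The engine for the upgrade is Proposition \ref{despol}, applied with $p=x$ and $q=x_n$, which gives
$$d_{\mathcal{I}}(x,x_n)\leq c(x,x_n)\,d_{\mathcal{P}}(x,x_n),$$
and the plan is to show that the right-hand side tends to $0$. For the constant, note that since $\|\cdot\|\leq\|\cdot\|_2$ the convergence $x_n\to x$ also holds in operator norm; using the continuity of the polar decomposition (the polar diffeomorphism $\varphi$) one obtains $|x_n|\to|x|$, hence $v_n:=|x|^{-1/2}|x_n||x|^{-1/2}\to 1$ in both norms and $\|\ln(v_n)\|\to 0$. Therefore $c(x,x_n)$ stays bounded; indeed it converges to $\sqrt{2}\,\|x\|\,\|x^{-1}\|$.

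For the factor $d_{\mathcal{P}}(x,x_n)$ I would invoke the explicit distance formula established after Theorem \ref{minpolar}, namely $d_{\mathcal{P}}(x,x_n)^2=d_2(u_x,u_{x_n})^2+d_{\mathfrak{p}}(|x|,|x_n|)^2$. The positive part tends to $0$ because $d_{\mathfrak{p}}(|x|,|x_n|)=\|\ln(v_n)\|_2\to 0$. For the unitary part, continuity of $\varphi$ yields $u_{x_n}\to u_x$ in $\|\cdot\|_2$; writing $u_x^{-1}u_{x_n}=e^{z_n}$ with the principal logarithm (so $\|z_n\|\leq\pi$ for $n$ large) forces $\|z_n\|_2\to 0$, whence $d_2(u_x,u_{x_n})=\|z_n\|_2\to 0$. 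Consequently $d_{\mathcal{P}}(x,x_n)\to 0$, and by symmetry of the geodesic distance $d_{\mathcal{I}}(x_n,x)=d_{\mathcal{I}}(x,x_n)\to 0$, which proves completeness.

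The step I expect to be the main obstacle is the last one: converting $\|\cdot\|_2$-convergence of the two polar factors into convergence of the geodesic distances $d_2$ and $d_{\mathfrak{p}}$. For the positive factor this is immediate from the closed-form length in Theorem \ref{minpos}, but for the unitary factor it rests on the local minimality structure of $U_2(\mathcal{H}_J)$ near the identity (the principal logarithm being norm-small and $\|\cdot\|_2$-small), which is exactly where \cite{Andruchow2} must be invoked. The other delicate point is the $\|\cdot\|_2$-continuity of $\varphi$ on ${\rm Sp}_2(\mathcal{H})$, equivalently of $g\mapsto|g|$ and $g\mapsto u_g$; this follows from $\varphi$ being a diffeomorphism, but it is the fact that makes the limiting polar factors meaningful.
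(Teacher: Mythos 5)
Your proposal is correct and follows essentially the same route as the paper: Lemma plus Proposition \ref{sp2cerrado} give a $\|\cdot\|_2$-limit $x\in {\rm Sp}_2(\mathcal{H})$, and Proposition \ref{despol} upgrades this to $d_{\mathcal{I}}$-convergence once $c(x,x_n)$ is bounded and $d_{\mathcal{P}}(x,x_n)\to 0$. The only (harmless) differences are technical: the paper bounds $\|\ln(v_n)\|$ via operator monotonicity ($|x_n|\le |x|+1$, so $\ln(v_n)\le \ln(1+|x|^{-1})$) rather than via norm-continuity of the logarithm, and it gets $d_2(u_{x_n},u_x)\to 0$ from the explicit metric-equivalence inequality $\sqrt{1-\pi^2/12}\,d_2\le\|\cdot\|_2\le d_2$ of \cite{Andruchow2} instead of your principal-logarithm argument, both of which are valid.
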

\begin{proof} Let $(x_n)\subset {\rm Sp}_2(\mathcal{H})$ be a $d_{\mathcal{I}}$-Cauchy sequence, by the above lemma it is $\Vert .\Vert_2$-Cauchy; then from Proposition \ref{sp2cerrado} there exists $x \in {\rm Sp}_2(\mathcal{H})$ such that $x_n \stackrel{\|.\|_2}\longrightarrow x$. Now we will show that $x_n \stackrel{d_{\mathcal{P}}} \longrightarrow x$; indeed from the continuity of the module we have that $\vert x_n \vert$ converges to $\vert x\vert$ in $\|.\|_2$ and its unitary part $u_{x_n}=x_n\vert x_n\vert^{-1}$ converges to $u_x=x\vert x\vert^{-1}$. The sequence $\vert x\vert^{-1/2}\vert x_n\vert\vert x\vert^{-1/2}$ converges to $1$ and then the geodesic distance $d_{\mathfrak{p}}(\vert x_n\vert,\vert x\vert)
=\|\ln(\vert x\vert^{-1/2}\vert x_n\vert\vert x\vert^{-1/2})\|_2\rightarrow 0.$ By the equivalence of metrics in $U_2(\mathcal{H}_J)$ (see \cite{Andruchow2} for a proof) we have $$\sqrt{1-\dfrac{\pi^2}{12}}d_2(u_{x_n},u_x)\leq \|u_{x_n}-u_x\|_2\leq d_2(u_{x_n},u_x)$$ and then 
 $$d_\mathcal{P}(x_n,x)=\big(d_2(u_{x_n},u_x)^{2}+d_{\mathfrak{p}}(\vert x_n\vert,\vert x\vert)^{2}\big)^{1/2} \longrightarrow 0.$$ From Proposition \ref{despol} we have $d_{\mathcal{I}}(x,x_n)\leq c(x,x_n)d_\mathcal{P}(x,x_n)$; now we will see that $c(x,x_n)$ is uniformly bounded. Indeed, for $n$ large we can suppose that $\vert x_n\vert \leq \vert x\vert +1$ then according the notation of Proposition \ref{despol} $$v_n=\vert x\vert^{-1/2}\vert x_n\vert \vert x \vert^{-1/2}\leq \vert x\vert^{-1/2}(\vert x\vert +1)\vert x\vert^{-1/2}=1+\vert x\vert^{-1}$$ and by the monotonicity of the logarithm $\ln(v_n)\leq \ln(1+\vert x\vert^{-1})$ and then $\|\ln(v_n)\|\leq \|\ln(1+\vert x\vert^{-1}) \|$ for all $n$. Finally we have $$c(x,x_n)^2=2\max \lbrace 
   \ e^{4\|\ln(v_n)\|}   \big( \|x\| \|x^{-1}\|\big)^2, \ \|x\| \|x^{-1}\| \rbrace$$ $$\leq 2\max \lbrace  \ e^{4\|\ln(1+\vert x\vert^{-1})\|} \big( \|x\| \|x^{-1}\|\big)^2 , \ \|x\| \|x^{-1}\| \rbrace$$ is clearly uniformly
bounded and then it is clear that $d_{\mathcal{I}}(x,x_n)\rightarrow 0$.
   
\end{proof}
\paragraph{Acknowledgements}
I want to thank Prof. E. Andruchow and Prof. G. Larotonda for their suggestions and support.

\bigskip
{\footnotesize Manuel L\'opez Galv\'an.\\
Instituto de Ciencias, Universidad Nacional de General Sarmiento.\\
JM Guti\'errez 1150 (1613) Los Polvorines. Buenos Aires, Argentina.\\
e-mail: mlopezgalvan@hotmail.com}

\end{document}